\theoremstyle{plain} 
\newtheorem{thm}{Theorem}[section]
\newtheorem{prop}[thm]{Proposition}
\newtheorem{lemma}[thm]{Lemma}
\newtheorem{cor}[thm]{Corollary}
\theoremstyle{remark}
\newtheorem{remark}[thm]{Remark}
\theoremstyle{definition}
\newtheorem{defin}[thm]{Definition}
\newcommand{\FF}{\mathbb{F}}
\newcommand{\NN}{\mathbb{N}}
\newcommand{\QQ}{\mathbb{Q}}
\newcommand{\ZZ}{\mathbb{Z}}
\newcommand{\Kbar}{\overline{K}}
\newcommand{\kbar}{\overline{k}}
\DeclareMathOperator{\sgn}{sgn}
\DeclareMathOperator{\Par}{Par}
\DeclareMathOperator{\Orb}{Orb}
\DeclareMathOperator{\ch}{char}
\DeclareMathOperator{\Gal}{Gal}
\DeclareMathOperator{\res}{Res}
\DeclareMathOperator{\Aut}{Aut}
\DeclareMathOperator{\charact}{char}
\newcommand{\dsps}{\displaystyle}
\newcommand{\mapa}{\bm{\alpha}}
\newcommand{\mapt}{\bm{\tau}}
\newcommand{\Pink}{\textup{Pink}}
\newcommand{\geom}{\textup{geom}}
\newcommand{\arith}{\textup{arith}}
\begin{document}

\title[Arboreal Galois groups of PCF quadratic polynomials]
{Arboreal Galois groups of postcritically finite quadratic polynomials: The periodic case}

\author[R. L. Benedetto]{Robert L. Benedetto}
\address{Robert L. Benedetto \\ Department of Mathematics and Statistics \\ Amherst College \\ Amherst, MA 01002 \\ USA}
\email{rlbenedetto@amherst.edu}

\author[D. Ghioca]{Dragos Ghioca}
\address{Dragos Ghioca \\ Department of Mathematics \\ University of British Columbia \\ Vancouver, BC V6T 1Z2 \\ Canada}
\email{dghioca@math.ubc.ca}

\author[J. Juul]{Jamie Juul}
\address{Jamie Juul \\ Department of Mathematics \\ Colorado State University \\ Fort Collins, CO 80523 \\ USA}
\email{jamie.juul@colostate.edu}

\author[T. J. Tucker]{Thomas J. Tucker}
\address{Thomas J. Tucker\\Department of Mathematics\\ University of Rochester\\
Rochester, NY, 14620 \\ USA}
\email{thomas.tucker@rochester.edu}

\subjclass[2010]{37P05, 11G50, 14G25}
%\subjclass[2010]{Primary 37P15, Secondary 11G50, 11R32, 14G25, 37P05, 37P30}
%
%\keywords{Arithmetic Dynamics, Arboreal Galois Representations,
%  Iterated Galois Groups}

\begin{abstract} 
We provide an explicit construction of the arboreal Galois group for the postcritically finite polynomial $f(z) = z^2 +c$, where $c$ belongs to some arbitrary field of characteristic not equal to $2$.
In this first of two papers, we consider the case that the critical point is periodic.
\end{abstract}

\maketitle

\section{Introduction}

Throughout our paper, we let $K$ be a field of characteristic not equal to $2$
with algebraic closure $\Kbar$,
and let $f(z)\in K[z]$ be a polynomial of degree $2$.
After conjugating by a $K$-rational change of coordinates,
we may assume that $f(z)=z^2+c$ for some $c\in K$.

%%%%%%%%%%%%%%%%%%%%%%%%%%%%%%%%%%%%%%%%%%%%%%%%%%%%%%%%%%%%%%%%%%%%%%%%%%%%%%%

\subsection{Arboreal Galois groups}

We consider the iterates $f^n$ of $f$ under composition,
where $f^0(z):=z$, and where $f^{n+1}=f\circ f^n$ for each integer $n\geq 0$.
A point $y\in\Kbar$ is said to be \emph{fixed} if $f(y)=y$,
or \emph{periodic} if $f^n(y)=y$ for some $n\geq 1$,
or \emph{preperiodic} if $f^n(y)=f^m(y)$ for some $n>m\geq 0$.
If $y$ is periodic, then its \emph{exact period} is the smallest $n\geq 1$ for which $f^n(y)=y$.
If $y$ is preperiodic but not periodic, then we say it is \emph{strictly preperiodic}.

Given a point $x_0\in K$, then for every integer $n\geq 0$, we define
\[ K_n:=K_{x_0,n}:= K(f^{-n}(x_0))
\qquad\text{and}\qquad
G_n:= G_{x_0,n}:=\Gal(K_n/K) \]
to be the $n$-th preimage field and its associated Galois group.
Note that $\cdots K_3/K_2/K_1/K$ is a tower of field extensions,
which we view as contained in $\Kbar$.
Thus, we may further define
\[ K_\infty:=K_{x_0,\infty}:=\bigcup_{n\geq 0} K_{x_0,n}
\qquad\text{and}\qquad
G_\infty:=G_{x_0,\infty}:=\Gal(K_{x_0,\infty}/K)\cong \varprojlim_n G_{x_0,n} .\]

If the backward orbit
\[ \Orb_f^-(x_0) := \bigcup_{n\geq 0} f^{-n}(x_0) \]
contains no critical values of $f$,
then each $f^{-n}(x_0)$ has exactly $2^n$ elements. If, in addition,
$x_0$ is not periodic under $f$, then the sets $f^{-n}(x_0)$ are pairwise disjoint,
and hence $\Orb_f^-(x_0)$ has the structure of an infinite binary rooted tree $T_\infty$,
with $x\in f^{-(n+1)}(x_0)$ connected to $f(x)\in f^{-n}(x_0)$ by an edge.
Thus, the action of the Galois group $G_{\infty}$ on the backward orbit induces
an embedding of $G_{\infty}$ into the automorphism group $\Aut(T_\infty)$ of the tree.
Similarly, for each $n\geq 0$, the action of $G_n$ on $f^{-n}(x_0)$ induces an embedding
of $G_n$ into the automorphism group $\Aut(T_n)$ of the finite binary rooted tree $T_n$
with $n$ levels. For this reason, the groups $G_n$ and $G_\infty$ have come to be
known as \emph{arboreal} Galois groups.
Moreover, given our interest in this action, whenever we discuss homomorphisms
or isomorphisms between groups acting on trees, we always mean homomorphisms
that are equivariant with respect to this action. We note that the problem of fully understanding the arboreal Galois groups has generated a great deal of research in the recent years; see 
\cite{ABCCF, BDGHT, BFHJY, BGJT1, BHL17, BJ19, JKMT, Juul, PinkPCF}, for example.

%%%%%%%%%%%%%%%%%%%%%%%%%%%%%%%%%%%%%%%%%%%%%%%%%%%%%%%%%%%%%%%%%%%%%%%%%%%%%%%

\subsection{Postcritically finite quadratic polynomials}

In this paper, we consider the case that $f$ is \emph{postcritically finite}, or PCF,
meaning that all of the the critical points of $f$ are preperiodic.
Since we have assumed $f(z)=z^2+c$, the critical points are $0$ and $\infty$,
with $\infty$ necessarily fixed; thus, to say that $f$ is PCF is equivalent to saying that
$0$ is preperiodic under $f$. In this case, it is well known that $G_{\infty}$ is
of infinite index in $\Aut(T_{\infty})$.

If the critical point $0$ is preperiodic, then the values $f(0), f^2(0),\ldots,f^r(0)$ are all distinct
for some maximal integer $r\geq 1$, with $f^{r+1}(0)$ repeating one of these values.
That is, we have $f^{r+1}(0)=f^{s+1}(0)$ for some minimal integers $r>s\geq 0$.
Equivalently, since the two preimages of $f(y)$ are $\pm y$, we have $f^r(0)=-f^s(0)$
for minimal integers $r>s\geq 0$.
Note that if $s=0$, then the point $0$ is periodic,
and $r$ is the cardinality of the \emph{forward orbit}
\[ \Orb_f^+(0) := \{f^i(0) : i\geq 0 \}\]
of $0$ under $f$. Otherwise, if $s\geq 1$, then $0$ is strictly preperiodic, and $|\Orb_f^+(0)|=r+1$.
In the latter case, the point $f^{s+1}(0)=f^{r+1}(0)$ is periodic of exact period $r-s\geq 1$,
preceded by a tail $\{0,f(0),\ldots,f^s(0)\}$ of cardinality $s+1\geq 2$.

%%%%%%%%%%%%%%%%%%%%%%%%%%%%%%%%%%%%%%%%%%%%%%%%%%%%%%%%%%%%%%%%%%%%%%%%%%%%%%%

\subsection{Previous work on describing the arboreal Galois groups for PCF quadratic polynomials}

In \cite{PinkPCF}, Pink describes the group $G_{\infty}$ for each of the various
choices of $r,s$ when the quadratic polynomial $f$ is PCF, in the case that $K=\kbar(t)$
is a rational function field over an algebraically closed field $\kbar$,
and that the root point of the preimage tree is $x_0=t$.
Pink denotes this group $G^{\geom}$, 
and he proves that it is isomorphic to a subgroup of $\Aut(T_{\infty})$ that he simply calls $G$,
but which we denote $G^{\Pink}_{r,s,\infty}$.
(When $s=0$, we sometimes write simply $G^{\Pink}_{r,\infty}$.)
He defines $G^{\Pink}_{r,s,\infty}$ via explicit (topological) generators,
each arising from the action of inertia in the context of $G^{\geom}$.

When $K=k(t)$ for $k$ \emph{not} algebraically closed, Pink denotes
the resulting group $G_{\infty}$ as $G^{\arith}$, and he describes how it fits
into a short exact sequence
\[ 1 \longrightarrow G^{\Pink}_{r,s,\infty} \longrightarrow G^{\arith}
\longrightarrow \Gal(\kbar/k)/N \longrightarrow 1, \]
for some normal subgroup $N$ of $\Gal(\kbar/k)$ depending on $r$, $s$, and $k$.

%%%%%%%%%%%%%%%%%%%%%%%%%%%%%%%%%%%%%%%%%%%%%%%%%%%%%%%%%%%%%%%%%%%%%%%%%%%%%%%

\subsection{Our approach}

This paper is the first of a pair of papers in which we have two
main goals. First, for each pair of integers $r>s\geq 0$, we construct
subgroups $B_{r,s,\infty}\subseteq M_{r,s,\infty}$ of
$\Aut(T_{\infty})$, coinciding with Pink's groups
$G^{\Pink}_{r,s,\infty}\cong G^{\geom}\subseteq G^{\arith}$, and we
show that the arboreal Galois group $G_{\infty}$ is isomorphic to a
subgroup of $M_{r,s,\infty}$. Our arguments apply over general fields
with arbitrary base points, rather than restricting to the case
$K=k(t)$ with base point $t$.  Our approach to this problem is also
more concrete than that of Pink; the groups $B_{r,s,\infty}$ and
$M_{r,s,\infty}$ are defined not by generators but rather as the set
of all $\sigma\in\Aut(T_{\infty})$ satisfying certain parity
conditions, which are also used to describe how elements of $G_\infty$
act on the roots of unity contained in $K_\infty$.  One advantage of
this approach is that it can allow us to describe the intersections
$K_n \cap k(\mu_{2^\infty})$ with a great deal of precision (see
Corollary \ref{65}).

Our second goal is to present and prove necessary and sufficient conditions for $G_{\infty}$
to be the whole group $M_{r,s,\infty}$ (see Theorem~\ref{thm:condition}).  Our results generalize those of \cite{ABCCF},
which gives a similar description of $B_{2,0,\infty}$ and $M_{2,0,\infty}$.
(This is the so-called Basilica map $f(z)=z^2-1$, for which $r=2$ and $s=0$,
i.e., the critical point at $0$ is periodic of period~$2$).

This paper handles the periodic case $s=0$ for arbitrary $r\geq1$, for which we denote the above groups simply as $M_{r,\infty}$ and $B_{r,\infty}$ (see Definition~\ref{def:Mn}). We handle the strictly preperiodic cases $s>0$ in a separate paper \cite{sequel}; nevertheless, even though additional technical complications arise in the strictly preperiodic cases, the main ideas for all of our constructions already arise in the periodic case considered here.

\begin{remark}
More generally, for any $d\geq 2$, the unicritical polynomial $f(z)=z^d+c$ is PCF
if and only if the critical point at $0$ is preperiodic.
There has been growing interest in the associated arithmetic dynamics, for example in
\cite{BDGHT,Buff18,BEK19,Gok20,HT15}, including arboreal Galois investigations in \cite{BGJT1,BHL17}.
In particular, in \cite{BGJT1}, we considered the constant field extensions $K_n\cap \overline{k}$
for PCF maps $f(z)=z^p+c$ with $p$ prime,
as well as the relationship between the generic arboreal Galois groups
(when $K=k(t)$ and $x_0=t$) and the corresponding groups when specializing the parameter $t$.
It would be interesting to find explicit descriptions of the 
of the resulting arboreal Galois groups for various PCF orbit structures for $z^d+c$.
For now, however, we restrict our attention to the case $d=2$, affording us both
Pink's work and the relative simplicity of binary trees versus $d$-ary trees.
\end{remark}

The next subsection is devoted to some further notation needed to state our main results.

%%%%%%%%%%%%%%%%%%%%%%%%%%%%%%%%%%%%%%%%%%%%%%%%%%%%%%%%%%%%%%%%%%%%%%%%%%%%%%%

\subsection{Some fundamentals}
\label{ssec:label}

It will be useful to assign labels to all of the nodes of the binary rooted trees $T_n$ and $T_{\infty}$,
using the two symbols $a$, $b$ to form words.
That is, for each integer $m\geq 0$ and each node $y$ at the $m$-th level of the tree,
we assign $y$ a \emph{label} in the form of a word $w\in\{a,b\}^m$ of length $m$,
in such a way that for every such $m$ and $y$, the two nodes lying above $y$
have labels $wa,wb\in\{a,b\}^{m+1}$. (Of course, in the tree $T_n$, this latter restriction
is vacuous for nodes $y$ in the top level $m=n$.)
See Figure~\ref{fig:treelabel} for an example of a labeling of the tree $T_3$.
Although the root node has the empty label $()$, we will often denote it as $x_0$.

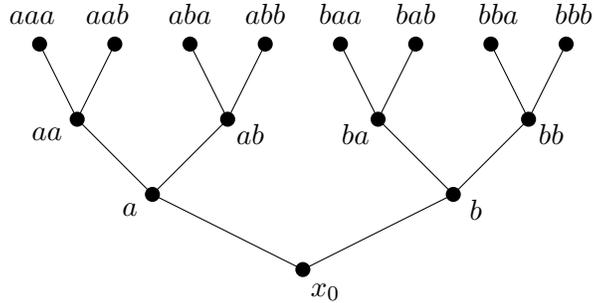
\begin{figure}
\begin{tikzpicture}
\path[draw] (.4,3.5) -- (.9,2.5) -- (1.4,3.5);
\path[draw] (2.4,3.5) -- (2.9,2.5) -- (3.4,3.5);
\path[draw] (4.4,3.5) -- (4.9,2.5) -- (5.4,3.5);
\path[draw] (6.4,3.5) -- (6.9,2.5) -- (7.4,3.5);
\path[draw] (.9,2.5) -- (1.9,1.5) -- (2.9,2.5);
\path[draw] (4.9,2.5) -- (5.9,1.5) -- (6.9,2.5);
\path[draw] (1.9,1.5) -- (3.9,0.5) -- (5.9,1.5);
\path[fill] (.4,3.5) circle (0.1);
\path[fill] (1.4,3.5) circle (0.1);
\path[fill] (2.4,3.5) circle (0.1);
\path[fill] (3.4,3.5) circle (0.1);
\path[fill] (4.4,3.5) circle (0.1);
\path[fill] (5.4,3.5) circle (0.1);
\path[fill] (6.4,3.5) circle (0.1);
\path[fill] (7.4,3.5) circle (0.1);
\path[fill] (0.9,2.5) circle (0.1);
\path[fill] (2.9,2.5) circle (0.1);
\path[fill] (4.9,2.5) circle (0.1);
\path[fill] (6.9,2.5) circle (0.1);
\path[fill] (1.9,1.5) circle (0.1);
\path[fill] (5.9,1.5) circle (0.1);
\path[fill] (3.9,0.5) circle (0.1);
\node (a) at (1.6,1.3) {$a$};
\node (b) at (6.2,1.3) {$b$};
\node (aa) at (0.5,2.3) {$aa$};
\node (ab) at (3.2,2.3) {$ab$};
\node (ba) at (4.6,2.3) {$ba$};
\node (bb) at (7.2,2.3) {$bb$};
\node (aaa) at (0.3,3.85) {$aaa$};
\node (aab) at (1.3,3.9) {$aab$};
\node (aba) at (2.4,3.9) {$aba$};
\node (abb) at (3.4,3.9) {$abb$};
\node (baa) at (4.4,3.9) {$baa$};
\node (bab) at (5.4,3.9) {$bab$};
\node (bba) at (6.5,3.9) {$bba$};
\node (bbb) at (7.5,3.9) {$bbb$};
\node (x0) at (4.2,0.2) {$x_0$};
\end{tikzpicture}
\caption{A labeling of $T_3$}
\label{fig:treelabel}
\end{figure}

We usually consider the nodes of $T_\infty$ as corresponding to the backward orbit
$\Orb_f^-(x_0)\in\Kbar$ of $x_0\in K$ under $f(z)=z^2+c\in K[z]$. Thus, we will often
conflate a point $y\in f^{-n}(x_0)$ with the corresponding node $y$ of the tree.
Having assigned a labeling to the tree, we will also sometimes conflate a node $y$
with its label.
On the other hand, when further clarity is needed for the backward orbit $\Orb_f^-(x_0)\in\Kbar$,
viewed as a tree of preimages, we will often write
the value $y\in f^{-n}(x_0)\subseteq\Kbar$ corresponding to the node
with label $w\in\{a,b\}^n$ as $y=[w]$.

Having labeled the tree, any tree automorphism
$\sigma\in\Aut(T_\infty)$ or $\sigma\in\Aut(T_n)$
must satisfy the following.
\begin{enumerate}
\item For every level $m\geq 0$ (up to $m\leq n$ for $T_n$),
$\sigma$ permutes the labels in $\{a,b\}^m$, and
\item For every level $m\geq 0$ (up to $m\leq n-1$ for $T_n$),
for each word $s_1\ldots s_m\in \{a,b\}^m$, we have either
\[ \sigma(s_1 \cdots s_m a) = \sigma(s_1 \cdots s_m) a
\quad\text{and}\quad \sigma(s_1 \cdots s_m b) = \sigma(s_1 \cdots s_m) b \]
or
\[ \sigma(s_1 \cdots s_m a) = \sigma(s_1 \cdots s_m )b
\quad\text{and}\quad \sigma(s_1 \cdots s_m) b = \sigma(s_1 \cdots s_m) a . \]
\end{enumerate}
For any tree automorphism $\sigma$ and $m$-tuple $x\in\{a,b\}^m$, we define 
the \emph{parity} $\Par(\sigma,x)$ of $\sigma$ at $x$ to be
\[ \Par(\sigma,x) := \begin{cases}
0 & \text{ if } \sigma(xa)=\sigma(x)a \text{ and } \sigma(xb)=\sigma(x)b
\\
1 & \text{ if } \sigma(xa)=\sigma(x)b \text{ and } \sigma(xb)=\sigma(x)a
\end{cases}. \]
Thus, any set of choices of $\Par(\sigma,x)$ for each node
$x$ of $T_{\infty}$ (respectively, $T_{n-1}$) determines a
unique automorphism $\sigma\in \Aut(T_{\infty})$ (respectively, $\sigma\in\Aut(T_n)$).

Note that if $\sigma(x)=x$, then $\Par(\sigma,x)$ is $0$ if $\sigma$ fixes
the two nodes above $x$, or $1$ if it transposes them. However, $\Par(\sigma,x)$
is defined even if $\sigma(x)\neq x$, although in that case its value depends also on the
labeling of the tree.

We also define $\sgn(\sigma,x)=(-1)^{\Par(\sigma,x)}$.
We have the following elementary relations:
\begin{equation}
\label{eq:sgn1}
\sgn(\sigma \tau, x) = \sgn\big(\sigma, \tau(x) \big) \cdot \sgn(\tau, x) ,
\end{equation}
and hence
\begin{equation}
\label{eq:sgn2}
\Par(\sigma\tau,x) = \Par\big(\sigma, \tau(x)\big) + \sgn\big(\sigma,\tau(x)\big) \Par(\tau,x) .
\end{equation}
Equation~\eqref{eq:sgn2} follows from equation~\eqref{eq:sgn1} by writing
$\Par(\cdot,\cdot)=(1-\sgn(\cdot,\cdot))/2$, or simply by checking the four
possible choices of $\Par(\tau,x)$ and $\Par(\sigma,\tau(x))$.

\begin{defin}
\label{def:Wri}
For each $i\geq 1$, define $W(r,i)$ to be the following set of words of length $r i-1$:
\[ W(r,i) = \big\{s_1 s_2 \cdots s_{r i-1} \, : \,
s_j\in\{a,b\},
\text{ with } s_j=a \text{ if } r | j \big\} . \]
\end{defin}

\begin{defin}
\label{def:Pmap}
Fix a labeling of $T_{\infty}$, and let $\sigma\in\Aut(T_{\infty})$.
%alternatively, let $n\geq 1$,
%fix a labeling of $T_n$, and let $\sigma\in\Aut(T_n)$.
%Let $m\geq 0$.
%; in the finite tree setting, assume $m\leq n-1$.
%For each $i\geq 0$, define $W(r,i)$ to be the following set of words of length $r i-1$:
%\[ W(r,i) = \big\{s_1 s_2 \cdots s_{r i-1} \, : \,
%s_j\in\{a,b\},
%\text{ with } s_j=a \text{ if } r | j \big\} . \]
%For any $x\in\{a,b\}^m$, define
For any word $x$ in the symbols $\{a,b\}$, define
\begin{equation}
\label{eq:Qdef}
Q_{r}(\sigma,x):= \sum_{i\geq 1} 2^i \sum_{w\in W(r,i)}
\Par(\sigma,xw) \in 2 \ZZ_2,
\end{equation}
%where the outer sum is over $i\geq 0$ for $T_{\infty}$,
%or over $0\leq i\leq \lfloor (n-m-1)/2 \rfloor$ for $T_n$.
%Define
and
\begin{equation}
\label{eq:Pdef}
P_{r}(\sigma,x):= (-1)^{\Par(\sigma,x)}  + Q_{r}(\sigma,xb) - Q_{r}(\sigma,xa)
\in \ZZ_2^{\times}.
\end{equation}
%which belongs to $(\ZZ/2^j\ZZ)^{\times}$ in the $T_n$ case, where $j=\lfloor (n-m+1)/2 \rfloor$,
%or to $\ZZ_2^{\times}$ in the $T_{\infty}$ case.
\end{defin}

Thus,
$P_{r}(\sigma,x)$ is $\pm 1$ plus a weighted sum of $\Par(\sigma,y)$
at certain nodes $y$. Specifically, the sum counts
half the nodes $r$ levels above $x$, each with weight $\pm 2$;
a quarter of the nodes $2r$ levels above $x$, each with weight $\pm 4$,
an eighth of the nodes $4r$ levels above $x$,  each with weight $\pm 8$; and so on.
(The $+$ weights are for nodes above $xb$, and the $-$ weights are for nodes above $xa$.)

\begin{defin}
\label{def:Mn}
Fix a labeling $a,b$ of $T_{\infty}$.
%Let $x_0$ denote (empty) label of the root point of the tree.
Define $M_{r,\infty}$ to be the subset of $\Aut(T_{\infty})$
for which 
\begin{equation}
\label{eq:Minfcond}
%P_{r}(\sigma,x) = P_{r}(\sigma,x_0)
%\quad\text{for every node } x\in T_{\infty}.
P_{r}(\sigma,x_1) = P_{r}(\sigma,x_2)
\quad\text{for all nodes } x_1,x_2 \text{ of } T_{\infty}.
\end{equation}
For $\sigma\in M_{r,\infty}$, define $P_r(\sigma)$ to be this common value of $P_r(\sigma,\cdot)$.
Define
\[B_{r,\infty}:=\{\sigma\in M_{r,\infty} : P_r(\sigma)=1\}.\]
\end{defin}

The map $P_r$ from $M_{r,\infty}$ to $\ZZ_2^\times$ is connected
closely to the $2$-adic cyclotomic character, as we shall see in Theorem
\ref{thm:Pembed}.  

As a final item of notation before stating our main result
(Theorem~\ref{thm:condition} in Subsection~\ref{ssec:mainthm}),
%we note that
for  $0\leq m\leq n\leq \infty$, it will be convenient to define homomorphisms
\[ \res_{n,m}:\Aut(T_n)\rightarrow \Aut(T_m) \]
given by restricting elements of $\Aut(T_n)$ to the $m$-th level of the tree.
%In particular, we let:
%\[ M_\infty := M_{r,\infty} \quad\text{and}\quad B_\infty := B_{r,\infty}. \]
In particular, for each integer $n\geq 1$, we may
define $B_{r,n}:=\res_{\infty,n}(B_{r,\infty})$ and $M_{r,n}:=\res_{\infty,n}(M_{r,\infty})$.

%%%%%%%%%%%%%%%%%%%%%%%%%%%%%%%%%%%%%%%%%%%%%%%%%%%%%%%%%%%%%%%%%%%%%%%%%%%%%%

\subsection{Statement of our main result}
\label{ssec:mainthm}

\begin{thm}
\label{thm:condition}
Let $K$ be a field of characteristic not equal to $2$,
and let $f(z)=z^2+c\in K[z]$ with $f^r(0)=0$
for some minimal integer $r\geq 1$.
Let $x_0\in K$, and define $K_{x_0,n}=K(f^{-n}(x_0))$, $K_{x_0,\infty}=\bigcup_{n=1}^\infty K_{x_0,n}$,
$G_{x_0,n}=\Gal(K_{x_0,n}/K)$, and $G_{x_0,\infty}=\Gal(K_{x_0,\infty}/K)$.
%Let $b:=2r+1\geq 3$, and define 
Further define $D_1,\ldots, D_r\in K$ by
\[ D_i := \begin{cases}
x_0-c & \text{ if } i=1, \\
f^i(0)-x_0 & \text{ if } i\geq 2.
\end{cases} \]
Then the following are equivalent.
\begin{enumerate}
\item $\dsps [K(\zeta_8,\sqrt{D_1},\ldots,\sqrt{D_r}) : K] = 2^{r+2}$
\item $[K_{x_0,2r+1}:K]=|M_{r,2r+1}|$.
\item $G_{x_0,2r+1}\cong M_{r,2r+1}$.
\item $G_{x_0,n}\cong M_{r,n}$ for all $n\geq 1$.
\item $G_{x_0,\infty}\cong M_{r,\infty}$.
\end{enumerate}
\end{thm}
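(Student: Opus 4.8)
The plan is to establish the chain of implications $(1)\Rightarrow(5)\Rightarrow(4)\Rightarrow(3)\Rightarrow(2)\Rightarrow(1)$, which is the natural order given that the strong statement $(5)$ about the full infinite tower must be deduced from the single numerical condition $(1)$, while the converse chain descends from $(5)$ to progressively weaker finite-level statements. The implications $(5)\Rightarrow(4)\Rightarrow(3)$ are essentially formal: $(4)$ is the restriction of $(5)$ to each finite level via $\res_{\infty,n}$ together with $M_{r,n}:=\res_{\infty,n}(M_{r,\infty})$, and $(3)$ is the special case $n=2r+1$. The implication $(3)\Rightarrow(2)$ is immediate since an equivariant isomorphism $G_{x_0,2r+1}\cong M_{r,2r+1}$ forces the orders to agree, so $[K_{x_0,2r+1}:k]=|G_{x_0,2r+1}|=|M_{r,2r+1}|$.

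The two substantive implications are $(2)\Rightarrow(1)$ and $(1)\Rightarrow(5)$. For the first, one should have already proved (in the body of the paper, which this excerpt precedes) that $G_{x_0,\infty}$ embeds equivariantly into $M_{r,\infty}$, and hence $G_{x_0,n}\hookrightarrow M_{r,n}$ for every $n$; thus $[K_{x_0,n}:k]\le|M_{r,n}|$ always, with equality at level $n=2r+1$ precisely when the embedding $G_{x_0,2r+1}\hookrightarrow M_{r,2r+1}$ is onto. The task is to translate surjectivity at level $2r+1$ into condition $(1)$. The natural mechanism is the map $P_r$ (together with $Q_r$) from Definition~\ref{def:Pmap}: the composite of $G_{x_0,\infty}\hookrightarrow M_{r,\infty}$ with $P_r\colon M_{r,\infty}\to\ZZ_2^\times$ should, by Theorem~\ref{thm:Pembed} (referenced but not displayed here), recover the $2$-adic cyclotomic character, so the image of $G_{x_0,\infty}$ in $M_{r,\infty}$ surjects onto $\ZZ_2^\times$ iff $k(\mu_{2^\infty})/k$ is as large as possible, i.e.\ iff $[k(\zeta_8):k]=4$ (the $\zeta_8$ part of condition~$(1)$). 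The remaining factor $2^r$ in $2^{r+2}=4\cdot 2^r$ should match the independence of the Kummer classes of $D_1,\dots,D_r$ over $k(\zeta_8)$; these $D_i$ are exactly the quantities $x_0-f(0)$ and $f^i(0)-x_0$ that arise as the relevant ``discriminant-type'' data governing whether the degree-$2$ steps $K_{x_0,i}/K_{x_0,i-1}$ and the parity generators at levels up to $2r+1$ are as independent as possible. So $(2)\Rightarrow(1)$ should follow by computing $|M_{r,2r+1}|$ explicitly, comparing it with $[K_{x_0,2r+1}:k]=[k(f^{-(2r+1)}(x_0)):k]$, and using multiplicativity of degrees through the relevant intermediate fields to see that equality forces both the cyclotomic factor and each of the $r$ Kummer factors to be nondegenerate, which is exactly $(1)$.

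For $(1)\Rightarrow(5)$, the strategy is a ``stability'' or bootstrapping argument: show that condition~$(1)$, a finite-level hypothesis, already forces $G_{x_0,\infty}$ to fill out all of $M_{r,\infty}$. Concretely, I would argue that $M_{r,\infty}$ is topologically generated (as a closed subgroup of $\Aut(T_\infty)$) by lifts of a small explicit generating set living at bounded level, so that once $G_{x_0,2r+1}= M_{r,2r+1}$ one can inductively lift: assuming $G_{x_0,n}=M_{r,n}$ for some $n\ge 2r+1$, the kernel of $M_{r,n+1}\to M_{r,n}$ is an elementary abelian $2$-group on which $M_{r,n}$ acts, and one must show that the corresponding extension $K_{x_0,n+1}/K_{x_0,n}$ is nonsplit in the same way — i.e.\ the new square roots adjoined at level $n+1$ are independent over $K_{x_0,n}$ modulo the constraints encoded by the parity/$P_r$ conditions. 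The key input is that the quantities controlling these new Kummer extensions are, up to squares and up to the already-accounted-for $D_i$ and roots of unity, governed by the same finite data $D_1,\dots,D_r,\zeta_8$; once those are independent at level $2r+1$, a ramification or specialization argument (e.g.\ tracking which primes ramify, or a direct computation of the relevant resultants/discriminants being non-squares) propagates the independence upward. I expect this inductive lifting step — proving that no unexpected collapse occurs at any level $n>2r+1$ — to be the main obstacle, since it requires identifying exactly why level $2r+1$ is the ``stabilization threshold'' (heuristically: $2r+1$ is large enough to see one full period of the critical orbit plus the extra level needed to detect the parity relations among the $P_r(\sigma,x)$), and then showing the finite condition $(1)$ genuinely certifies non-degeneracy all the way up rather than merely at the bottom.
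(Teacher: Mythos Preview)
Your chain of implications matches the paper's, and your handling of the easy directions $(5)\Rightarrow(4)\Rightarrow(3)\Rightarrow(2)$ is the same. The two substantive steps differ in execution.

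For $(2)\Rightarrow(1)$, the paper avoids computing $|M_{r,2r+1}|$ explicitly. Instead it observes that $(2)$ forces $G_{x_0,r}\cong M_{r,r}=\Aut(T_r)$ (the $P_r$-constraints are vacuous at levels $\le r$) and, via the cyclotomic interpretation of $P_r$ at level $2r+1$, that $[K_{x_0,r}(\zeta_8):K_{x_0,r}]=4$. A standard discriminant/Kummer lemma (Lemma~\ref{lem:disc_condition}) then translates $G_{x_0,r}\cong\Aut(T_r)$ directly into $[L:k]=2^r$ where $L=k(\sqrt{D_1},\dots,\sqrt{D_r})$, and the degree count finishes. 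Your plan to unpack $|M_{r,2r+1}|$ and compare intermediate degrees would also work but is more laborious; the paper's route is cleaner because it passes through level~$r$, where $M_{r,r}$ is the full automorphism group.

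For $(1)\Rightarrow(5)$, the paper does \emph{not} attempt the inductive bootstrapping you outline. Instead it checks that $(1)$ gives $[k(\zeta_8):k]=4$ and, again via Lemma~\ref{lem:disc_condition} applied over $k'=k(\zeta_8)$, that $\Gal(K_{x_0,r}\cdot k'/k')\cong\Aut(T_r)$; it then invokes \cite[Theorem~4.6]{BGJT1} as a black box to conclude $G_{x_0,\infty}\cong M_{r,\infty}$. So the ``stabilization threshold'' you worry about is handled by that external reference, and the paper never works at level $2r+1$ in this direction at all. Your self-contained induction is a reasonable alternative strategy and would make the argument independent of \cite{BGJT1}, but you have correctly identified it as the hard part, and carrying it out would require substantially more than what you have sketched --- in particular, you would first need $(1)\Rightarrow(3)$ before your induction can even begin, and that step is not addressed in your proposal.
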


\begin{remark}
  When $K$ contains $\sqrt{-1}$, the conditions of Theorem \ref{thm:condition}
  can never hold, as condition~(1) necessarily fails.
  However, our methods
  can still be used to prove a slightly more complicated result
  involving an appropriate subgroup of $M_{r,\infty}$.
  Specifically, this subgroup is the inverse image under $P_r$
  of the image in $\ZZ_2^\times$ of the $2$-adic
  cyclotomic character of $\Gal({\bar K}/K)$.
\end{remark}
% \begin{remark}
%   When $k$ contains a $\xi_8$ or even $i$, it is impossible to apply
%   Theorem \ref{thm:condition} since none of the conditions can ever
%   hold.  In particular, we can never have
%   $G_{x_0,\infty}\cong M_{r,\infty}$.  When $k$ contains all $2^n$-th
%   roots of unity, replacing $M_{r,\infty}$ with $B_{r,\infty}$,
%   replacing $2r+1$ with $r$ in (2) and (3), and modifying (1) in an
%   obvious way yields a statement that can follows easily from the work of
%   this paper.  When $k$ contains $i$ but not all $2^n$-th roots of
%   unity the situation is is slightly more complicated to describe.  We
%   let $n \geq 2$ denote the largest $n$ such that $\xi_{2^n} \in k$
%   and let $M'_{r,\infty}$ denote the subgroup of $M_{r,\infty} $
%   coming from taking $P_r^{-1}$ of the image of the $2$-adic character
%   of $\Gal({\bar k}/k)$ in $\ZZ_2^*$.  Then replacing $M_{r,\infty}$
%   with $M'_{r,\infty}$, replacing $\xi_8$ with $\xi_{2^{n+1}}$ in (1),
%   and replacing $2r+1$ with $n r + 1$ in (2) and (3), gives a
%   statement that can be proved without difficulty from the methods in
%   this paper. 
% \end{remark}

%%%%%%%%%%%%%%%%%%%%%%%%%%%%%%%%%%%%%%%%%%%%%%%%%%%%%%%%%%%%%%%%%%%%%%%%%%%%%%%

\subsection{Outline of the paper}
%The outline of the paper is as follows.
Section~\ref{sec:notate} concerns a useful elementary result for
general quadratic polynomials that underlies many of our subsequent
arguments.  In Section~\ref{sec:roots} we present explicit formulas
yielding $2$-power roots of unity as arithmetic combinations of
preimages of an arbitrary root point under our quadratic polynomial
$f(z)=z^2+c$ when the critical point is periodic.
Section~\ref{sec:MPdef} is devoted to proving that $M_{r,\infty}$ is a
subgroup of $\Aut(T_\infty)$ and that $P_r$ is a group homomorphism with kernel  $B_{r,\infty}$ (this is done in Theorem~\ref{thm:Mgroup}), while
Section~\ref{sec:perGal} shows how $M_{r,\infty}$ and $B_{r,\infty}$
realize the arboreal Galois action.  In Section~\ref{sec:PinkGroups},
we prove that our group $B_{r,\infty}$ concides with Pink's group
$G^{\geom}$, and that Pink's larger group $G^{\arith}$ is contained in our $M_{r,\infty}$.
(Recall that $G^{\geom}$ and $G^{\arith}$ are special cases of $G_{\infty}$
for $K=\kbar(t)$ and $K=k(t)$, respectively, with $x_0=t$.)
Finally, in Section~\ref{sec:obtain}, we prove
Theorem~\ref{thm:condition}, giving necessary and sufficient
conditions for the Galois group $G_{\infty}$ to be the full group $M_{r,\infty}$.

%%%%%%%%%%%%%%%%%%%%%%%%%%%%%%%%%%%%%%%%%%%%%%%%%%%%%%%%%%%%%%%%%%%%%%%%%%%%%%%
%%%%%%%%%%%%%%%%%%%%%%%%%%%%%%%%%%%%%%%%%%%%%%%%%%%%%%%%%%%%%%%%%%%%%%%%%%%%%%%

\section{An elementary lemma}
\label{sec:notate}

The following result provides a simple but essential algebraic
relationship among elements of a backward orbit under a polynomial of
the form $f(z)=z^2+c$. We have stated it with the language of
multiplicity, but in practice we will only apply it to backward orbits
with no critical points, for which the relevant equation $f^m(z)=y$
has no repeated roots.  Note that in this lemma, we do not make any
assumptions about the polynomial $f(z)$ beyond the fact that it is of the form $z^2+c$,
whereas in later sections, we will almost always work exclusively
with quadratic polynomials satisfying $f^r(0) = 0$ for some $r \geq 1$.

\begin{prop}
\label{prop:key}
Let $K$ be a field of characteristic not equal to $2$.
Let $c\in K$, define $f(z)=z^2+c$, let $y\in\overline{K}$, and let $m\geq 1$.
Choose $\alpha_1,\ldots,\alpha_{2^{(m-1)}}\in f^{-m}(y)$ so that
the roots of $f^m(z)=y$, repeated according to multiplicity, are precisely
\begin{equation}
\label{eq:alpharoots}
\pm\alpha_1, \ldots, \pm \alpha_{2^{(m-1)}} .
\end{equation}
Then
\[ \big( \alpha_1 \alpha_2 \cdots \alpha_{2^{(m-1)}} \big)^2 =
\begin{cases}
f^m(0)-y & \text{ if } m\geq 2, \\
y-f(0) & \text{ if } m=1.
\end{cases} \]
\end{prop}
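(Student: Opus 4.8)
The plan is to exploit the fact that $f(z)=z^2+c$ is, up to the additive shift by $c$, the squaring map. Writing $f^m = f^{m-1}\circ f$ turns the equation $f^m(z)=y$ into $f^{m-1}(z^2+c)=y$, which reduces the roots of $f^m(z)=y$ to square roots of (shifts of) the roots of $f^{m-1}(w)=y$. The whole statement then falls out of comparing the product of half the roots at level $m$ with a product over the roots at level $m-1$, which can in turn be read off a monic factorization.

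First I would make the substitution precise. Let $P(w):=f^{m-1}(w)-y$, a monic polynomial of degree $2^{m-1}$, so that $P(w)=\prod_{\beta}(w-\beta)$, the product being over the roots $\beta$ of $f^{m-1}(w)=y$, i.e. over $f^{-(m-1)}(y)$, counted with multiplicity. Then
\[
f^m(z)-y = P(z^2+c) = \prod_{\beta}\big(z^2-(\beta-c)\big) = \prod_{\beta}\big(z-\sqrt{\beta-c}\big)\big(z+\sqrt{\beta-c}\big),
\]
so the $2^m$ roots of $f^m(z)=y$, counted with multiplicity, are exactly the values $\pm\sqrt{\beta-c}$ as $\beta$ ranges (with multiplicity) over $f^{-(m-1)}(y)$. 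Consequently any selection $\alpha_1,\dots,\alpha_{2^{(m-1)}}$ of one representative from each pair $\{\pm\sqrt{\beta-c}\}$ satisfies $\alpha_j^2=\beta_j-c$, so the multiset $\{\alpha_1^2,\dots,\alpha_{2^{(m-1)}}^2\}$ equals $\{\beta-c : \beta\in f^{-(m-1)}(y)\}$ with multiplicity, and therefore
\[
\big(\alpha_1\alpha_2\cdots\alpha_{2^{(m-1)}}\big)^2 = \prod_{j}\alpha_j^2 = \prod_{\beta\in f^{-(m-1)}(y)}(\beta-c).
\]

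To finish, I would evaluate $\prod_{\beta}(\beta-c)$. For $m\geq 2$ there are $2^{m-1}$ factors, an even number, so $\prod_{\beta}(\beta-c)=\prod_{\beta}(c-\beta)=P(c)=f^{m-1}(c)-y$; since $c=f(0)$ this is $f^{m-1}(f(0))-y=f^m(0)-y$, as claimed. For $m=1$ the set $f^{-(m-1)}(y)=f^0(y)=\{y\}$ is a singleton, and the product is simply $y-c=y-f(0)$, which is exactly the sign-flipped form in the statement; this base case is forced to be separate precisely because $2^{m-1}=1$ is odd only when $m=1$.

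The argument is essentially bookkeeping, and I do not expect a serious obstacle. The two points requiring care are (i) tracking multiplicities when passing from $f^{-m}(y)$ down to $f^{-(m-1)}(y)$, so that the identity holds "repeated according to multiplicity" as the proposition permits (this matters only when a critical point, i.e. $0$, lies in the relevant backward orbit, forcing $\beta=c$ for some $\beta$); and (ii) the parity of $2^{m-1}$, which is the sole reason the $m=1$ case looks different. Neither causes any real difficulty.
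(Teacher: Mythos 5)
Your proposal is correct and is essentially the same argument as the paper's. The paper writes $f^m(z)-y = g(z^2)$ with $g(u)=f^{m-1}(u+c)-y$ and reads the answer off the constant term of $g$, while you write $f^m(z)-y = P(z^2+c)$ with $P(w)=f^{m-1}(w)-y$ and evaluate $P(c)$; since $g(0)=P(c)$, these are the same computation, and both hinge on the parity of $\deg g = 2^{m-1}$ to explain the separate $m=1$ case.
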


\begin{proof}
We may write $f^m(z)-y=g(z^2)$, where $g\in\overline{K}[z]$
is a polynomial of degree $2^{m-1}$.
Thus, the roots of $f^m(z)-y$ come in plus/minus pairs,
justifying the description of the roots in equation~\eqref{eq:alpharoots}.
Moreover, the roots of $g$ are precisely
$\alpha_1^2, \ldots, \alpha_{2^{m-1}}^2$,
so that the constant term of $g$ is
\begin{equation}
\label{eq:gprod}
(\pm 1)^{\deg(g)} (\alpha_1\cdots \alpha_{2^{m-1}} )^2 .
\end{equation}
Since $\deg(g)=2^{m-1}$, we have
a $-$ sign in equation~\eqref{eq:gprod} if $m=1$,
and a $+$ sign if $m\geq 2$.
On the other hand, by definition of $g$, the constant
term of $g$ is $f^m(0)-y$,
and the desired conclusion is immediate.
\end{proof}

%%%%%%%%%%%%%%%%%%%%%%%%%%%%%%%%%%%%%%%%%%%%%%%%%%%%%%%%%%%%%%%%%%%%%%%%%%%%%%%
%%%%%%%%%%%%%%%%%%%%%%%%%%%%%%%%%%%%%%%%%%%%%%%%%%%%%%%%%%%%%%%%%%%%%%%%%%%%%%%

\section{Roots of unity arising in backward orbits}
\label{sec:roots}

Throughout the rest of the paper we assume $f(z)=z^2+c$ with $f^r(0)=0$ for some minimal integer $r\geq 1$.

\begin{lemma}\label{lem:per_2nroots}
Let $x\in \Kbar$ not in the forward orbit of $0$, and let $\pm y$ be its two immediate preimages under $f$. Let $A_1=\{y\}$ and $B_1=\{-y\}$. For each $n\geq 2$, let $A_n$ be a subset of $f^{-r}(A_{n-1})$ such that $A_n$ contains exactly half of the elements of $f^{-r}(A_{n-1})$ and $f^{-r}(A_{n-1})= \{\pm \alpha :\alpha\in A_n\}$. Similarly, let $B_n$ be a subset of $f^{-r}(B_{n-1})$ containing exactly half of the elements of $f^{-r}(B_{n-1})$ such that $f^{-r}(B_{n-1})=\{\pm \beta :\beta\in B_n\}$. Then \[\gamma_n := \frac{\prod_{\alpha\in A_n}\alpha}{\prod_{\beta\in B_n}\beta}\] is a primitive $2^{n}$-th root of unity.
\end{lemma}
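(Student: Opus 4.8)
The plan is to compute $\gamma_n^2$ inductively and show that, up to sign, $\gamma_n^2 = \gamma_{n-1}$, while $\gamma_1 = -1$; this will force $\gamma_n$ to be a $2^n$-th root of unity, and a separate parity argument will show it is primitive. First I would handle the base case $n=1$: here $\gamma_1 = y/(-y) = -1$, a primitive $2$nd root of unity. For the inductive step, fix $n \geq 2$ and note that $A_n$ is a set of representatives for the plus/minus pairs among $f^{-r}(A_{n-1}) = \bigsqcup_{\alpha' \in A_{n-1}} f^{-r}(\alpha')$. The key observation is that Proposition \ref{prop:key} applies to each fiber $f^{-r}(\alpha')$: since $x$ (hence every element of the backward orbit under consideration) is not in the forward orbit of $0$, the relevant equations $f^r(z) = \alpha'$ have no critical points among their roots, so the roots are genuinely $2^{r-1}$ plus/minus pairs, and their product over a set of representatives squares to $f^r(0) - \alpha' = -\alpha'$ (using $r \geq 2$; the case $r = 1$ needs the $m=1$ clause of Proposition \ref{prop:key}, giving $\alpha' - f(0) = \alpha' - c$, and one checks the sign works out — I would treat $r=1$ as a small separate remark or fold it in carefully). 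Multiplying over all $\alpha' \in A_{n-1}$ gives $\bigl(\prod_{\alpha \in A_n}\alpha\bigr)^2 = \prod_{\alpha' \in A_{n-1}}(-\alpha') = (-1)^{|A_{n-1}|}\prod_{\alpha'\in A_{n-1}}\alpha'$, and similarly for the $B$'s. Taking the ratio, the factors $(-1)^{|A_{n-1}|}$ and $(-1)^{|B_{n-1}|}$ cancel since $|A_{n-1}| = |B_{n-1}| = 2^{(n-2)r}$ for $n \geq 2$ (and $|A_1| = |B_1| = 1$), so $\gamma_n^2 = \gamma_{n-1}$.

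From $\gamma_n^2 = \gamma_{n-1}$ and $\gamma_1 = -1$ it follows immediately by induction that $\gamma_n^{2^n} = 1$, so $\gamma_n$ is a $2^n$-th root of unity. To see it is \emph{primitive}, it suffices to show $\gamma_n^{2^{n-1}} = -1$, equivalently $\gamma_n^{2^{n-1}} \neq 1$; but $\gamma_n^{2^{n-1}} = \gamma_{n-1}^{2^{n-2}} = \cdots = \gamma_1 = -1$ by repeated squaring, which is $\neq 1$ since $\charact K \neq 2$. Hence $\gamma_n$ has exact order $2^n$.

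The main obstacle I anticipate is \emph{bookkeeping with the choices and the signs}, rather than any deep difficulty. Proposition \ref{prop:key} determines each fiber-product only up to sign, and the sets $A_n, B_n$ involve a free choice of representative from each plus/minus pair; I need to be careful that the statement $\gamma_n^2 = \gamma_{n-1}$ holds for \emph{any} such choices (changing a representative $\alpha \mapsto -\alpha$ flips the sign of $\prod_{\alpha\in A_n}\alpha$, hence flips the sign of $\gamma_n$, but does not affect $\gamma_n^2$, so this is fine — but it should be said explicitly). The other delicate point is the cardinality count $|A_{n-1}| = |B_{n-1}|$ ensuring the extra signs cancel in the ratio; this uses that at each stage we pass to exactly half of a set of size $2^r$ times the previous size, which should be spelled out. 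Finally, the $r=1$ versus $r\geq 2$ dichotomy in Proposition \ref{prop:key} means the base of the fiber-product computation has two cases; I would either verify both give $\gamma_n^2 = \gamma_{n-1}$ or, more cleanly, note that $-\alpha' = f^r(0) - \alpha'$ when $r \geq 2$ and handle $r=1$ (where $f^r(0) = c$, so we instead get $\alpha' - c = \alpha' - f(0)$, which Proposition \ref{prop:key} with $m=1$ equals the squared product) as its own short line.
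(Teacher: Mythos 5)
Your proposal is correct and takes essentially the same approach as the paper: apply Proposition~\ref{prop:key} fiber by fiber over $A_{n-1}$ and $B_{n-1}$ together with $f^r(0)=0$ to show $\gamma_n^2=\gamma_{n-1}$ (the sign factors cancel because $|A_{n-1}|=|B_{n-1}|$), then deduce primitivity from $\gamma_1=-1$. The only small slip is the parenthetical cardinality $|A_{n-1}|=2^{(n-2)r}$, which should read $2^{(n-2)(r-1)}$; since the argument only uses the equality $|A_{n-1}|=|B_{n-1}|$, this is immaterial.
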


\begin{proof}
Let $x\in \Kbar$ not in the forward orbit of $0$ and
consider its two immediate preimages $\pm y$. First note \[\gamma_1=\frac{y}{-y}=-1,\] so the result holds in this case. %Also, the roots of $f^{r}(z)-y$ are exactly $\{\pm \alpha:\alpha\in A_2\}$ and the roots of $f^{r}(z)-(-y)$ are exactly $\{\pm\beta:\beta\in B_2\}$. 
Then by Proposition~\ref{prop:key} and the fact that $f^{r}(0)=0$, we have
\[ \gamma_2^2 = \frac{\left(\prod_{\alpha\in A_2}\alpha\right)^2}{\left(\prod_{\alpha\in B_2}\beta\right)^2}= \frac{(-1)^{2^{r-1}}(-y)}{(-1)^{2^{r-1}}y} = -1, \]
so that $\gamma_2$ is a primitive fourth root of unity. 

More generally, suppose $\gamma_{n-1}$ is a primitive $2^{n-1}$ root of unity for $n\geq 2$. For any $\alpha'\in A_{n-1}$, we have $f^{-r}(\alpha') = \{\pm\alpha: \alpha \in f^{-r}(\alpha')\cap A_{n}\}$. Hence  
\begin{align*}
\gamma_{n}^2 &= \frac{\left(\prod_{\alpha\in A_{n}}\alpha\right)^2}{\left(\prod_{\beta\in B_{n}}\beta\right)^2}
=\frac{\prod_{\alpha'\in A_{n-1}}\left(\prod_{\alpha\in A_{n}\cap f^{-r}(\alpha')}\alpha\right)^2}{\prod_{\beta'\in B_{n-1}}\left(\prod_{\beta\in B_{n}\cap f^{-r}(\beta')}\beta\right)^2}\\
&=\frac{\prod_{\alpha'\in A_{n-1}}(-1)^{2^{r-1}}(-\alpha')}{\prod_{\beta'\in B_{n-1}}(-1)^{2^{r-1}}(-\beta')}
=\frac{\prod_{\alpha'\in A_{n-1}}\alpha'}{\prod_{\beta'\in B_{n-1}}\beta'}=\gamma_{n-1},
\end{align*}
where the third equality follows from Proposition~\ref{prop:key}.
Hence $\gamma_{n}$ is a primitive $2^{n}$-th root of unity.
\end{proof}

In the statement of our next result, recall Definition~\ref{def:Wri} of the set $W(r,i)$
of words $s_1\cdots s_{ri-1}$ of length $ri-1$ with $s_j=a$ whenever $r|j$.
In addition, observe that the nodes of $T_{\infty}$ with labels $[xawa]$ for $w\in W(r,i)$
form a choice of a set $A_{i+1}$ as in Lemma~\ref{lem:per_2nroots},
and those with labels $[xbwa]$ form a choice of a set $B_{i+1}$.

\begin{lemma}\label{lem:per_labeling}
Let $x_0\in K$ not in the forward orbit of $0$, and choose a sequence of primitive $2^n$-th roots of unity
$\zeta_2,\zeta_{4}, \zeta_{8},\dots$ such that $\zeta_2=-1$ and $\zeta_{2^n}^2=\zeta_{2^{n-1}}$. It is possible to label the tree $T_\infty$ of preimages $\Orb^-_f(x_0)$ in a way such that for every node $x$ of the tree
and every integer $i\geq 1$, we have 
\begin{equation}\label{eq:perprod}
\frac{\prod_{w\in W(r,i)} [xawa]}{\prod_{w\in W(r,i)} [xbwa]}=\zeta_{2^{i+1}}.
\end{equation}
\end{lemma}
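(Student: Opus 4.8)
The plan is to build the labeling level-by-level, greedily arranging the $\pm$ ambiguities in the choice of labels so that the product formula~\eqref{eq:perprod} comes out right, and then to verify the identity by reducing it to Lemma~\ref{lem:per_2nroots}. The key observation is that a node $x$ at level $m$ has two preimages $\{\gamma,-\gamma\}$ under $f$, and once we have fixed which of the two children of $x$ is called $xa$ (say $xa=[\,xa\,]$ takes one of the values $\pm\gamma$), the label of the sibling is forced. So a labeling of $T_\infty$ is exactly a choice, for each node $x$ and each child slot, of how to resolve this sign. The left-hand side of~\eqref{eq:perprod} is a ratio of products over the word-sets $W(r,i)$, which (by Definition~\ref{def:Wri}) single out the nodes lying $ri$ levels above $xa$ and above $xb$ whose path-words end in $a$ and have an $a$ in every $r$-th slot; these are precisely the nodes that would be selected by $r$-fold iterated preimages along ``one-sided'' choices. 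Thus~\eqref{eq:perprod} is, up to bookkeeping, the statement $\gamma_{i+1}=\zeta_{2^{i+1}}$ from Lemma~\ref{lem:per_2nroots} applied with base point the node $x$ (more precisely with $y$ the value at $xa$, or with $\pm y$ the pair $\{[xa],[xb]\}$), where $A_n$ and $B_n$ are read off from the labeled subtree.

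Concretely, I would proceed as follows. First, observe that the restriction ``$s_j=a$ if $r\mid j$'' in $W(r,i)$ means that the nodes indexed by $W(r,i)$ above $xa$ form exactly a valid choice of the set $A_i$ in Lemma~\ref{lem:per_2nroots} relative to the pair $\{[xa],[xb]\}$: going up $r$ levels and restricting to the child whose last symbol is $a$ is the same as choosing ``half'' of $f^{-r}$ of a given point, with the other half being the negatives (this uses the tree-automorphism parity structure and the fact that $f^{-r}$ of a point consists of plus/minus pairs because $f^r(z)-y=g(z^2)$ as in Proposition~\ref{prop:key}). Then the left side of~\eqref{eq:perprod} is literally $\gamma_{i+1}$ for that base point, and Lemma~\ref{lem:per_2nroots} says it is a primitive $2^{i+1}$-th root of unity. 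The remaining content is to choose the labeling coherently across all nodes and all $i$ simultaneously so that this primitive root is the \emph{specified} $\zeta_{2^{i+1}}$ rather than merely some primitive root, and so that the compatibility $\zeta_{2^{i+1}}^2=\zeta_{2^i}$ is respected. I would do this by induction on the level: having labeled $T_m$, when I label level $m+1$ I get to flip the sign of each new label, which multiplies the relevant $\gamma$'s by $-1$; one shows the available flips act transitively enough on the possible values (all of which are primitive roots of the right order by the lemma) to hit the prescribed $\zeta$'s. The squaring relation $\zeta_{2^{i+1}}^2=\zeta_{2^i}$ matches the recursion $\gamma_n^2=\gamma_{n-1}$ established inside the proof of Lemma~\ref{lem:per_2nroots}, so consistency is automatic once the base case $\gamma_1=-1=\zeta_2$ is arranged.

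I would organize the induction so that at stage $m$ I only need to fix the labels of the new nodes that appear in some $W(r,i)$-indexed product involving a node at level $\le m$; a short counting argument (the sets $W(r,i)$ for nodes at different levels overlap in a controlled, ``nested'' way because $W(r,i+1)$ contains $W(r,i)$ as a prefix-block) shows there is no conflict — once a sign is forced by the requirement at one node, it is consistent with the requirement at every other node, precisely because of the multiplicativity in~\eqref{eq:perprod} and the recursion $\gamma_n^2 = \gamma_{n-1}$.

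\textbf{Main obstacle.} The routine part is recognizing~\eqref{eq:perprod} as an instance of $\gamma_{i+1}$ from Lemma~\ref{lem:per_2nroots}; the delicate part is the simultaneous coherence of the sign choices: I must check that the labeling can be chosen so that~\eqref{eq:perprod} holds for \emph{every} node $x$ and \emph{every} $i$ at once, with the \emph{prescribed} sequence $(\zeta_{2^n})$, and that no node's required sign is over-determined by two incompatible constraints coming from two different $(x,i)$. Handling this bookkeeping — essentially showing the constraint system is triangular with respect to the level filtration and that the recursion $\zeta_{2^{i+1}}^2 = \zeta_{2^i}$ exactly matches $\gamma_{i+1}^2 = \gamma_i$ — is where the real work lies.
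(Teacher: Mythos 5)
Your overall plan—build the labeling inductively level by level, use Proposition~\ref{prop:key} to recognize the left side of~\eqref{eq:perprod} as a $\gamma_{i+1}$ in the sense of Lemma~\ref{lem:per_2nroots}, and then exploit the $\pm$ freedom in assigning child labels to force the ratio to equal the prescribed $\zeta_{2^{i+1}}$—is exactly the paper's strategy, and you have correctly isolated where the real work lies: showing that the many simultaneous constraints (one for each pair $(x,i)$ whose product lives at the new level) can all be met without conflict. But your resolution of that difficulty is where the proposal has a genuine gap.

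You claim a ``short counting argument'' based on $W(r,i+1)$ containing $W(r,i)$ as a prefix-block, together with ``multiplicativity'' and the recursion $\gamma_{n}^2=\gamma_{n-1}$, shows the constraints cannot conflict. That is not how the paper avoids conflict, and as stated it is not a proof: the constraints for different $i$ living at the same new level $n$ really do involve overlapping sets of level-$n$ nodes, so consistency is not automatic. What makes it work is a very specific choice of which free sign to use for each constraint. When adjusting the constraint for a node $x$ at level $n-(ri+1)$, the paper swaps precisely the pair $xba^{ri-1}a$ and $xba^{ri-1}b$ — note the leading $b$ after $x$ and then a run of $a$'s. The crucial combinatorial observation is that the $(ri+1)$-st-to-last symbol of those two labels is $b$, whereas for any $j>i$, every node appearing in the constraint~\eqref{eq:perprod} for $(x',j)$ has an $a$ in that exact position (because $W(r,j)$ forces $a$ at every index divisible by $r$, and the relevant position falls at index $r(j-i)$ inside $w'$). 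Combined with processing $i$ from $\lfloor(n-1)/r\rfloor$ downward to $1$, this guarantees each swap touches a pair of labels that no earlier-fixed constraint ever looked at, so the system is literally triangular with one fresh free bit per constraint. Your sketch gestures at nesting but does not identify this position-of-$b$ distinction nor the descending order of $i$, and without them the claim that the sign choices are unobstructed is unjustified.
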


\begin{proof}
We will label the tree inductively, starting from the root point $x_0$.
Label the tree arbitrarily up to level $r+1$.

For each successive $n\geq r+1$,
suppose that we have labeled $T_{n-1}$ so that for every node $x$ at every level
$0\leq \ell\leq n-r-2$ of $T_{n-1}$,
equation~\eqref{eq:perprod} holds for each $1\leq i\leq \lfloor (n-\ell-2)/r\rfloor$.
(Note that $\lfloor (n-\ell-2)/r\rfloor$ is the maximum value of $i$ so that the subtree of height $ri+1$ rooted at $x$ is contained in $T_{n-1}$. In particular, our supposition is vacuous for $n\leq r+1$.)
For each node $y$ at level $n-1$, label the two points
of $f^{-1}(y)$ arbitrarily as $ya$ and $yb$. We will now adjust these labels that we have just
applied at the $n$-th level of the tree.

Let $m:=\lfloor (n-1)/r\rfloor \geq 1$, so that $n=rm+t$ with $1\leq t\leq r$.
Starting with $i=m$ (and counting down to $i=1$),
for each node $x$ at level $n-(ri+1)$ of the tree, consider the ratio
\[ \gamma := \frac{\prod_{w\in W(r,i)} [xawa]}{\prod_{w\in W(r,i)} [xbwa]}\]
of equation~\eqref{eq:perprod}. Arguing as in the proof of Lemma~\ref{lem:per_2nroots},
%(note that the set $\{ywa\colon w\in W(r,i)\}$
%is a particular choice of subset $A_{i+1}$ as in Lemma~\ref{lem:per_2nroots}),
it follows from Proposition~\ref{prop:key} that
\[ \gamma^2 = \frac{\prod_{w\in W(r,i-1)} [xawa]}{\prod_{w\in W(r,i-1)} [xbwa]}
\quad\text{ if }i\geq 2,\quad\text{or}\quad
\gamma^2 = \frac{[xa]}{[xb]}=-1 \quad\text{ if }i=1, \]
which is equal to $\zeta_{2^{i-1}}$ by our induction hypothesis
when $i\geq 2$, and by definition of $\zeta_2$ when $i=1$.
Thus, $\gamma=\pm \zeta_{2^i}$. If $\gamma = -\zeta_{2^i}$,
exchange the labels of the two level-$n$ nodes $xba^{ri-1}a$ and $xba^{ri-1}b$,
where $a^j$ denotes a string of $j$ copies of the symbol $a$.
Since these two nodes are negatives of each other, we now have $\gamma = \zeta_{2^i}$. 

Repeat the process above for each $x$ at level $n-(ri+1)$ of the tree for
successively smaller $i=m-1, m-2, \dots, 1$.
Note that for any node $x$ at level $n-(ri+1)$, the nodes
$xba^{ri-1}a$ and $xba^{ri-1}b$ have a $b$ appearing as the
$(ri+1)^{\textup{st}}$-to-last-symbol in their labels.
On the other hand, for any $j>i$, by definition of $W(r,j)$, all of the nodes appearing in
the analog of equation~\eqref{eq:perprod} for $j$ in place of $i$
(and a node at level $n-(rj+1)$ in place of $x$)
have the symbol $a$ in that position in their labels.
Thus, exchanging the labels of the nodes $xba^{ri-1}a$ and $xba^{ri-1}b$ does
not affect the truth of equation~\eqref{eq:perprod} for nodes addressed in previous steps.
\end{proof}

%%%%%%%%%%%%%%%%%%%%%%%%%%%%%%%%%%%%%%%%%%%%%%%%%%%%%%%%%%%%%%%%%%%%%%%%%%%%%%%
%%%%%%%%%%%%%%%%%%%%%%%%%%%%%%%%%%%%%%%%%%%%%%%%%%%%%%%%%%%%%%%%%%%%%%%%%%%%%%%

\section{A preliminary result regarding the associated arboreal subgroup}
\label{sec:MPdef}

We now prove that the sets $B_{r,\infty}\subseteq M_{r,\infty}\subseteq\Aut(T_\infty)$
of Definition~\ref{def:Mn} are in fact groups.

\begin{thm}
\label{thm:Mgroup}
The following hold.
\begin{enumerate}
\item
$M_{r,\infty}$ is a subgroup of $\Aut(T_{\infty})$.
\item
The map $P_{r}:M_{r,\infty}\to\ZZ_2^{\times}$
given by $P_{r}:\sigma\mapsto P_{r}(\sigma)$ is a group homomorphism with kernel $B_{r,\infty}$.
\end{enumerate}
\end{thm}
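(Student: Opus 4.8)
The plan is to establish both parts simultaneously by computing how $P_r$ behaves under composition, since the group axioms for $M_{r,\infty}$ will follow once we know $P_r(\sigma\tau) = P_r(\sigma)\cdot P_r(\tau)$ for $\sigma,\tau$ with the relevant parity expressions well-defined. The central object is the quantity $Q_r(\sigma,x)$, which is an infinite $2$-adically convergent sum of parities; the key technical step is a cocycle-type identity for $Q_r$ under composition, derived from equation~\eqref{eq:sgn2}.

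First I would compute $\Par(\sigma\tau, xw)$ for $w \in W(r,i)$ using \eqref{eq:sgn2}: we get $\Par(\sigma\tau,xw) = \Par(\sigma,\tau(xw)) + \sgn(\sigma,\tau(xw))\Par(\tau,xw)$. Summing against the weights $2^i$ over $w\in W(r,i)$ and over $i$, the second group of terms is \emph{almost} $Q_r(\tau,x)$ except each summand is twisted by the sign $\sgn(\sigma,\tau(xw))\in\{\pm 1\}$. The first group of terms, $\sum_i 2^i\sum_{w} \Par(\sigma,\tau(xw))$, needs to be reorganized: as $w$ ranges over $W(r,i)$, the node $xw$ ranges over half of the level-$(ri-1)$ descendants of $x$ (those with $a$ in every position divisible by $r$), and $\tau$ permutes the level-$(ri-1)$ nodes. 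The crucial combinatorial point is that $\tau$, being a tree automorphism, sends the subtree above $x$ to the subtree above $\tau(x)$, and — because of the structure of $W(r,i)$ (it constrains only positions divisible by $r$, and a tree automorphism's action on a node's label below level $ri-1$ is ``consistent'' with its action higher up) — the image set $\{\tau(xw) : w\in W(r,i)\}$ need not be of the form $\{(\tau x)w' : w'\in W(r,i)\}$, but it does consist of descendants of $\tau(x)$, and I would track exactly which ones via a change-of-variables argument on words. I expect this bookkeeping to be the main obstacle: one must show the ``$\sigma$-part'' of the sum can be rewritten, after reindexing, as a combination of $Q_r(\sigma, \tau(x) \cdot(\text{suitable prefix}))$ terms, and that the sign twists telescope appropriately.

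Next I would assemble $P_r(\sigma\tau, x) = (-1)^{\Par(\sigma\tau,x)} + Q_r(\sigma\tau,xb) - Q_r(\sigma\tau,xa)$. Using $(-1)^{\Par(\sigma\tau,x)} = (-1)^{\Par(\sigma,\tau x)}(-1)^{\Par(\tau,x)}$ from \eqref{eq:sgn1}, and the $Q_r$-composition identity at the two children $xa, xb$, I would aim to factor the whole expression. The guiding principle: $P_r(\sigma,\cdot)$ should behave like a multiplicative character, so modulo the hypothesis that $\sigma,\tau\in M_{r,\infty}$ (so that $P_r(\sigma,\cdot)$ and $P_r(\tau,\cdot)$ are each constant in the node), the cross-terms should collapse to give $P_r(\sigma\tau,x) = P_r(\sigma)P_r(\tau)$, \emph{independent of $x$}. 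This proves $M_{r,\infty}$ is closed under composition and that $P_r$ restricted to it is multiplicative. Closure under inverses then follows formally: for $\sigma\in M_{r,\infty}$, apply the identity to $\sigma$ and $\sigma^{-1}$ to see $P_r(\sigma^{-1},x)\cdot P_r(\sigma) $ is forced to be constant, hence $\sigma^{-1}\in M_{r,\infty}$ with $P_r(\sigma^{-1}) = P_r(\sigma)^{-1}$. (The identity element clearly lies in $M_{r,\infty}$ with $P_r(\mathrm{id})=1$ since all parities vanish.)

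Finally, part (2) is then immediate: $P_r: M_{r,\infty}\to\ZZ_2^\times$ is a homomorphism by the multiplicativity just shown, and its kernel is exactly $\{\sigma : P_r(\sigma)=1\} = B_{r,\infty}$ by Definition~\ref{def:Mn}. One subtlety I would flag and handle carefully: the infinite sums defining $Q_r$ converge $2$-adically (each inner sum over $W(r,i)$ is an integer, multiplied by $2^i$), so all manipulations — reindexing, splitting into the $\sigma$- and $\tau$-parts, twisting by signs — must be justified as operations on convergent series in $\ZZ_2$; since $\ZZ_2$ is complete and the rearrangements involve only finitely many terms at each $2$-adic precision level, this is routine but should be stated. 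The genuine work, as noted, is the word-combinatorics showing the $\sigma$-part of the composition sum reorganizes correctly against the definition of $W(r,i)$.
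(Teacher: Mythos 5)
Your overall framework is the right one and matches the paper's: establish a cocycle/multiplicativity identity $P_r(\sigma\tau,x)=P_r(\sigma)\,P_r(\tau,x)$, deduce closure under composition, get inverses formally, and read off the kernel. You also correctly identify the central combinatorial difficulty --- that for $i\geq 2$ the image set $\{\tau(xw) : w\in W(r,i)\}$ is \emph{not} of the form $\{\tau(x)w' : w'\in W(r,i)\}$, since the constraint ``$s_j=a$ whenever $r\mid j$'' is broken by $\tau$ whenever $\Par(\tau,\cdot)=1$ along the path at a depth divisible by $r$. But you then stop at exactly that obstacle, hoping the reindexing ``telescopes appropriately.'' That is the genuine gap: a direct change-of-variables on $W(r,i)$ for $i\geq 2$ does not cleanly produce $Q_r(\sigma,\tau(x)\cdot\text{prefix})$ terms, and I do not see how your bookkeeping would close up without a further idea.

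The missing idea in the paper is to package the putative cocycle defect as
\[
Z_r(\sigma,\tau,x) := Q_r(\sigma,\tau(x)) + P_r(\sigma)\,Q_r(\tau,x) - Q_r(\sigma\tau,x),
\]
and to show it obeys the self-similar recursion
\[
Z_r(\sigma,\tau,x) = 2\sum_{w\in W(r,1)} Z_r(\sigma,\tau,xwa).
\]
The point is that one never needs to understand $\tau$'s action on $W(r,i)$ for $i\geq 2$: the recursion $Q_r(\tau,y)=2\sum_{w\in W(r,1)}\bigl(\Par(\tau,yw)+Q_r(\tau,ywa)\bigr)$ reduces everything to $W(r,1)=\{a,b\}^{r-1}$, and $W(r,1)$ \emph{is} permuted as a set by any tree automorphism (the paper's observation~\eqref{eq:Wr1shift}), which is exactly what your approach lacked for general $i$. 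Iterating the recursion yields $Z_r(\sigma,\tau,x)\in 2^i\ZZ_2$ for every $i$, hence $Z_r\equiv 0$ by $2$-adic completeness. From $Z_r=0$ the identity $P_r(\sigma)P_r(\tau,x)=P_r(\sigma\tau,x)$ follows by a short computation. One further refinement worth absorbing: the paper proves this identity assuming only $\sigma\in M_{r,\infty}$, with $\tau\in\Aut(T_\infty)$ arbitrary; that extra generality is precisely what makes the closure-under-inverses step as formal as you wanted it to be (apply the identity with $\tau=\sigma^{-1}$ and cancel $P_r(\sigma)\in\ZZ_2^\times$).
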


\begin{proof}
\textbf{Step~1}. We begin with two simple observations that apply
to any $\tau\in\Aut(T_{\infty})$ and any node $y$ of $T_{\infty}$.
First, we have $W(r,1)=\{a,b\}^{r-1}$ is the set of all $2^{r-1}$ words of length $r-1$ in $\{a,b\}$,
and hence
\begin{equation}
\label{eq:Wr1shift}
\{ \tau(y)w : w\in W(r,1) \} = \{ \tau(yw) : w\in W(r,1) \}
\end{equation}
are precisely the same set of $2^{r-1}$ nodes of $T_{\infty}$.
Second, we have
\begin{equation}
\label{eq:Qrident}
Q_r(\tau,y) = 2 \sum_{w\in W(r,1)} \big( \Par(\tau, yw) + Q_r(\tau,ywa) \big),
\end{equation}
by definition of $Q_r$ (see equation~\eqref{eq:Qdef}), since for any $i\geq 2$, we have
\[ W(r,i) = \{ w a w' : w\in W(r,1) \text{ and } w'\in W(r,i-1) \}. \]

\medskip

\textbf{Step~2}.
For any $\sigma\in M_{r,\infty}$, any $\tau\in\Aut(T_{\infty})$, and any node $x$ of $T_{\infty}$, define
\[ Z_r(\sigma,\tau,x) := Q_r(\sigma,\tau(x)) + P_r(\sigma) Q_r(\tau,x) - Q_r(\sigma\tau,x) \in \ZZ_2 .\]
In Step~3 we will show that $Z_r$ is identically zero, but in this step we claim only that
\begin{equation}
\label{eq:Zrsum}
Z_r(\sigma,\tau,x) = 2 \sum_{w\in W(r,1)} Z_r(\sigma,\tau,xwa).
\end{equation}

To prove the claim of equation~\eqref{eq:Zrsum}, expand each appearance of $Q_r$
in the definition of $Z_r$ according to equation~\eqref{eq:Qrident}, to obtain
\begin{align*}
Z_r(\sigma,\tau,x) &= 2\sum_{w\in W(r,1)} \Big[
\Par(\sigma,\tau(x)w) + P_r(\sigma)\Par(\tau,xw) -\Par(\sigma\tau,xw) \\
& \qquad + Q_r(\sigma,\tau(x)wa) + P_r(\sigma) Q_r(\tau,xwa) - Q_r(\sigma\tau, xwa) \Big] \\
&=  2\sum_{w\in W(r,1)} \Big[
\Par(\sigma,\tau(xw)) + P_r(\sigma)\Par(\tau,xw) -\Par(\sigma\tau,xw) \\
& \qquad + Q_r(\sigma,\tau(xw)a) + P_r(\sigma) Q_r(\tau,xwa) - Q_r(\sigma\tau, xwa) \Big]
\end{align*}
by applying observation~\eqref{eq:Wr1shift} in the second equality.
Expanding the first appearance of $P_r(\sigma)$ here as $P_r(\sigma,\tau(xw))$, we have
\begin{align*}
Z_r(\sigma,\tau,x) &= 2\sum_{w\in W(r,1)} \Big[
\Par(\sigma,\tau(xw))  + (-1)^{\Par(\sigma,\tau(xw))} \Par(\tau,xw) - \Par(\sigma\tau,xw) \\
& \qquad + \Par(\tau,xw) \big( Q_r(\sigma,\tau(xw)b) - Q_r(\sigma,\tau(xw)a) \big) \\
& \qquad + Q_r(\sigma,\tau(xw)a) + P_r(\sigma) Q_r(\tau,xwa) - Q_r(\sigma\tau, xwa) \Big] \\
&= 2\sum_{w\in W(r,1)} \tilde{Z}_r(\sigma,\tau,x,w)
\end{align*}
where, after applying equation~\eqref{eq:sgn2} to $\Par(\sigma\tau,xw)$, we define
\begin{align*}
\tilde{Z}_r(\sigma,\tau,x,w) & :=
\Par(\tau,xw) \big( Q_r(\sigma,\tau(xw)b) - Q_r(\sigma,\tau(xw)a) \big) \\
& \quad + Q_r(\sigma,\tau(xw)a) + P_r(\sigma) Q_r(\tau,xwa) - Q_r(\sigma\tau, xwa).
\end{align*}

For each $w\in W(r,1)$, we consider two cases.
If $\Par(\tau,xw)=0$, then $\tau(xw)a = \tau(xwa)$, so
\[ \tilde{Z}_r(\sigma,\tau,x,w) = 0+ 
Q_r(\sigma,\tau(xwa)) + P_r(\sigma) Q_r(\tau,xwa) - Q_r(\sigma\tau, xwa)
= Z(\sigma,\tau,xwa) .\]
On the other hand, if $\Par(\tau,xw)=1$, then $\tau(xw)b = \tau(xwa)$, so
\begin{align*}
\tilde{Z}_r(\sigma,\tau,x,w) &=  Q_r(\sigma,\tau(xwa)) - Q_r(\sigma,\tau(xw)a) \\
& \quad + Q_r(\sigma,\tau(xw)a) + P_r(\sigma) Q_r(\tau,xwa) - Q_r(\sigma\tau, xwa) \\
& = Q_r(\sigma,\tau(xwa)) + P_r(\sigma) Q_r(\tau,xwa) - Q_r(\sigma\tau, xwa)
= Z(\sigma,\tau,xwa).
\end{align*}
That is, in all cases, we have $\tilde{Z}_r(\sigma,\tau,x,w) = Z(\sigma,\tau,xwa)$.
Hence,
\[ Z_r(\sigma,\tau,x) = 2\sum_{w\in W(r,1)} \tilde{Z}_r(\sigma,\tau,x,w)
= 2\sum_{w\in W(r,1)} Z_r(\sigma,\tau,xwa), \]
proving the claim of equation~\eqref{eq:Zrsum}.

\medskip

\textbf{Step~3}. We will now show that $Z_r$ is indeed identically zero,
and furthermore, with $\sigma$ and $\tau$ as in Step~2, that
$P_r(\sigma) P_r(\tau,x)=P_r(\sigma\tau,x)$.

For $\sigma,\tau,x$ as in Step~2, a straightforward induction on $i\geq 0$ gives
\[ Z_r(\sigma,\tau,x) = 2^i \sum_{w\in W(r,i)} Z_r(\sigma,\tau,xwa) \in 2^i \ZZ_2
\quad \text{for every } i\geq 0 .\]
Because $\bigcap_{i\geq 0} 2^i \ZZ_2 = \{0\}$, it follows that $Z_r(\sigma,\tau,x) = 0$.

Expanding $P_r(\tau,x)$ according to definition~\eqref{eq:Pdef}, we have
\begin{align*}
P_r(\sigma) P_r(\tau,x)
&= (-1)^{\Par(\tau,x)} P_r(\sigma) + P_r(\sigma) Q_r(\tau,xb) - P_r(\sigma) Q_r(\tau,xa) \\
&=(-1)^{\Par(\tau,x)} \big( (-1)^{\Par(\sigma,\tau(x))} + Q_r(\sigma,\tau(x)b) - Q_r(\sigma,\tau(x)a) \big) \\
& \quad + P_r(\sigma) Q_r(\tau,xb) - P_r(\sigma) Q_r(\tau,xa) \\
&=(-1)^{\Par(\sigma\tau,x)} + Q_r(\sigma,\tau(xb)) - Q_r(\sigma,\tau(xa)) \\
& \quad + P_r(\sigma) Q_r(\tau,xb) - P_r(\sigma) Q_r(\tau,xa),
\end{align*}
where in the second equality,
we expanded the first appearance of $P_r(\sigma)$ as $P_r(\sigma,\tau(x))$,
and in the third equality, we applied both equation~\eqref{eq:sgn2} and the fact that
\[ (-1)^{\Par(\tau,x)} \big( Q_r(\sigma,\tau(x)b) - Q_r(\sigma,\tau(x)a) \big)
=  Q_r(\sigma,\tau(xb)) - Q_r(\sigma,\tau(xa)) .\]
Therefore, we have
\begin{align*}
P_r(\sigma) P_r(\tau,x)
& = (-1)^{\Par(\sigma\tau,x)} + Q_r(\sigma\tau,xb) - Q_r(\sigma\tau,xa)
+ Z_r(\sigma,\tau,xb) - Z_r(\sigma,\tau,xa) \\
& = P_r(\sigma\tau,x) + 0 - 0 = P_r(\sigma\tau,x) .
\end{align*}
by definition of $P_r$ and the fact that $Z_r=0$.

\medskip

\textbf{Step~4}. 
We now show that $M_{r,\infty}$ is a subgroup of $\Aut(T_\infty)$.
The identity $e\in\Aut(T_\infty)$ clearly satisfies $Q_r(e,x)=0$ and $\Par(e,x)=0$ for all nodes $x$,
whence $P_r(e,x)=1$, so that $e\in M_{r,\infty}$.
Given $\sigma,\tau\in M_{r,\infty}$ and $x_1,x_2\in X$, we have
\[ P_r(\sigma\tau, x_1) = P_r(\sigma) P_r(\tau,x_1) = P_r(\sigma) P_r(\tau,x_2) = P_r(\sigma\tau, x_2),\]
where the first and third equalities are by Step~3, and the second is by the fact that $\tau\in M_{r,\infty}$.
Thus, $\sigma\tau\in M_{r,\infty}$. Applying Step~3 with $\sigma^{-1}$ in the role of $\tau$,
we also have
\begin{align*}
P_r(\sigma) P_r(\sigma^{-1}, x_1) &= P_r(\sigma\sigma^{-1}, x_1) = P_r(e, x_1) = P_r(e, x_2) \\
&= P_r(\sigma\sigma^{-1}, x_2) = P_r(\sigma) P(\sigma^{-1}, x_2) .
\end{align*}
Multiplying both sides on the left by $P_r(\sigma)^{-1}\in\ZZ_2^\times$, it follows that
%\[ P_r(\sigma^{-1}, x_1) = P_r(\sigma^{-1}, x_2) = P_r(\sigma)^{-1} P(e, x_1), \]
\[ P_r(\sigma^{-1}, x_1) = P_r(\sigma^{-1}, x_2) , \]
proving that $\sigma^{-1}\in M_{r,\infty}$.
Thus, $M_{r,\infty}$ is a subgroup of $\Aut(T_\infty)$.

Finally, the map $P_r$ is a homomorphism by Step~3, and its kernel is clearly $B_{r,\infty}$.
\end{proof}

%%%%%%%%%%%%%%%%%%%%%%%%%%%%%%%%%%%%%%%%%%%%%%%%%%%%%%%%%%%%%%%%%%%%%%%%%%%%%%%
%%%%%%%%%%%%%%%%%%%%%%%%%%%%%%%%%%%%%%%%%%%%%%%%%%%%%%%%%%%%%%%%%%%%%%%%%%%%%%%

\section{The action of Galois on roots of unity}
\label{sec:perGal}
Our next result shows that $P_r(\sigma,x)$ determines the action of $\sigma\in\Aut(T_{\infty})$
on the roots of unity $\zeta_{2^n}$ constructed in Lemma~\ref{lem:per_labeling},
thus explaining the otherwise mysterious Definition~\ref{def:Pmap} of $P_r(\sigma,x)$.
In addition, a Galois element $\sigma$ must of course act consistently on roots of unity, thus explaining
the defining condition~\eqref{eq:Minfcond} of $M_{r,\infty}$, that $P_r(\sigma,x)$ be independent of $x$.
That is, for $\sigma\in \Aut(T_{\infty})$ to belong to $M_{r,\infty}$,
Definition~\ref{def:Mn} imposes only the rudimentary condition that $\sigma$ must act
consistently on roots of unity $\zeta_{2^n}$.

%The following result shows that when $\sigma$ is an element of the Galois
%group, the function $P_r(\sigma,x)$ is completely determined by the action of $\sigma$
%on  roots of unity $\zeta_{2^n}$. In particular, $P_r(\sigma, x)$ is independent of $x$.

\begin{thm}\label{thm:Pembed}
	Let $x_0\in K$ not in the forward orbit of $0$, and choose primitive $2^n$-th roots of unity
	$\zeta_2,\zeta_4,\zeta_8,\dots\in \Kbar$ such that $\zeta_{2^n}^2=\zeta_{2^{n-1}}$. Label the tree $T_\infty$ of preimages in $\Orb_f^-(x_0)$ as in Lemma~\ref{lem:per_labeling}. Then for any node $x\in \Orb_f^-(x_0)$ and any $\sigma\in G_\infty=\Gal(K_\infty/K)$, we have 
	\begin{equation}
		\label{eq:Pembed}
		\sigma(\zeta_{2^n}) = \zeta_{2^n}^{P_{r}(\sigma,x)},
	\end{equation}
	for all $n\geq 1$.
	In particular, the image of $G_{\infty}$ in $\Aut(T_{\infty})$,
	induced by its action on $\Orb_f^-(x_0)$ via this labeling,
	is contained in $M_{r,\infty}$.
	Furthermore, if $\zeta_{2^n}\in K$ for all $n$, then this Galois image
	is contained in $B_{r,\infty}$.
\end{thm}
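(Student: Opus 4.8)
The plan is to apply $\sigma$ to the labeling identity \eqref{eq:perprod} of Lemma~\ref{lem:per_labeling}. For a node $z$ and $i\ge1$ write $\Pi^A_i(z):=\prod_{w\in W(r,i)}[zawa]$ and $\Pi^B_i(z):=\prod_{w\in W(r,i)}[zbwa]$, and set $\Pi^A_0(z):=[za]$, $\Pi^B_0(z):=[zb]$; then \eqref{eq:perprod} says $\Pi^A_i(z)/\Pi^B_i(z)=\zeta_{2^{i+1}}$ for all $z$ and all $i\ge1$, and this also holds for $i=0$ since $[za]=-[zb]$ and $\zeta_2=-1$. Fix $\sigma\in G_\infty$ and a node $x$. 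Since $\sigma$ commutes with $f$ and fixes $-1$, it acts on $\Orb_f^-(x_0)$ as a tree automorphism respecting the $\pm$-pairing at every level, with $\sigma([y])=[\sigma(y)]$; applying $\sigma$ to $\Pi^A_i(x)/\Pi^B_i(x)=\zeta_{2^{i+1}}$ thus reduces the computation of $\sigma(\zeta_{2^{i+1}})$ to that of $\sigma(\Pi^A_i(x))$ and $\sigma(\Pi^B_i(x))$.

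The technical heart is the following claim, proved by induction on $i\ge0$: for every node $z$,
\[ \sigma\big(\Pi^A_i(z)\big)=\zeta_{2^{i+1}}^{-Q_r(\sigma,za)}\cdot\Pi^{\epsilon}_i(\sigma(z)),\qquad\text{where }\epsilon=A\text{ if }\Par(\sigma,z)=0\text{ and }\epsilon=B\text{ if }\Par(\sigma,z)=1, \]
together with the analogous identity for $\Pi^B_i$ (replace $za$ by $zb$, and $\epsilon$ by the complementary symbol). The case $i=0$ is immediate: $\sigma([za])=[\sigma(za)]$ is $[\sigma(z)a]$ or $[\sigma(z)b]$ according to $\Par(\sigma,z)$, while $\zeta_2^{-Q_r(\sigma,za)}=1$ because $Q_r(\sigma,za)\in2\ZZ_2$. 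For the inductive step, the decomposition $W(r,i)=\{vaw':v\in\{a,b\}^{r-1},\ w'\in W(r,i-1)\}$ from Step~1 of the proof of Theorem~\ref{thm:Mgroup} gives $\Pi^A_i(z)=\prod_{v\in\{a,b\}^{r-1}}\Pi^A_{i-1}(zav)$; apply $\sigma$ termwise, invoke the inductive hypothesis at each node $zav$, and then reorganize using two inputs: (i) equation~\eqref{eq:Qrident}, $Q_r(\sigma,za)=2\sum_{v}\big(\Par(\sigma,zav)+Q_r(\sigma,zava)\big)$, which collapses the accumulated exponents into $\zeta_{2^i}^{-\frac12 Q_r(\sigma,za)}=\zeta_{2^{i+1}}^{-Q_r(\sigma,za)}$; and (ii) the labeling identity again in the form $\Pi^B_{i-1}(z')=\zeta_{2^i}^{-1}\Pi^A_{i-1}(z')$, used to convert each $\Pi^B$-shaped product arising at an intermediate node $zav$ with $\Par(\sigma,zav)=1$ back into a $\Pi^A$-shaped product, after which the surviving products reassemble (via the bijection $v\mapsto u$ determined by $\sigma(zav)=\sigma(za)u$) into $\Pi^{\epsilon}_i(\sigma(z))$.

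I expect the reorganization in the inductive step to be the main obstacle. The delicate point is that $\sigma$ carries the $\Pi^A$-shaped set of nodes above $zav$ to a $\Pi^A$- or $\Pi^B$-shaped set above $\sigma(zav)$ precisely according to $\Par(\sigma,zav)$, and the discrepancy between these two shapes is exactly one factor of $\zeta_{2^i}^{-1}$ — this is the mechanism that feeds the parities at nodes $r$ levels above $z$ into $Q_r(\sigma,za)$ with weight $2$, those $2r$ levels up with weight $4$, and so on. It is, in effect, the same bookkeeping as in Steps~2--3 of the proof of Theorem~\ref{thm:Mgroup}, now run multiplicatively with $2$-power roots of unity in place of the additive computation in $\ZZ_2$.

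Granting the claim, the theorem follows readily. Dividing the two formulas and using $\Pi^A_i(\sigma(x))/\Pi^B_i(\sigma(x))=\zeta_{2^{i+1}}$, a case split on $\Par(\sigma,x)$ yields $\sigma(\zeta_{2^{i+1}})=\zeta_{2^{i+1}}^{(-1)^{\Par(\sigma,x)}+Q_r(\sigma,xb)-Q_r(\sigma,xa)}=\zeta_{2^{i+1}}^{P_r(\sigma,x)}$ for all $i\ge1$, while the $n=1$ case is trivial since $\sigma(-1)=-1$ and $P_r(\sigma,x)$ is odd; this is \eqref{eq:Pembed}. Because $\zeta_{2^n}$ is a primitive $2^n$-th root of unity, comparing \eqref{eq:Pembed} at two nodes $x_1,x_2$ forces $P_r(\sigma,x_1)\equiv P_r(\sigma,x_2)\pmod{2^n}$ for every $n$, hence $P_r(\sigma,x_1)=P_r(\sigma,x_2)$, so the image of $G_\infty$ in $\Aut(T_\infty)$ lies in $M_{r,\infty}$. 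Finally, if $\zeta_{2^n}\in K$ for all $n$ then $\sigma$ fixes each $\zeta_{2^n}$, so \eqref{eq:Pembed} gives $P_r(\sigma,x)\equiv1\pmod{2^n}$ for all $n$, i.e.\ $P_r(\sigma)=1$, placing the image in $B_{r,\infty}$.
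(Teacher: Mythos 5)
Your proof is correct and takes essentially the same route as the paper's. The inductive claim you state for $\sigma(\Pi^A_i(z))$ is precisely the intermediate identity the paper derives (equating $\prod_{w\in W(r,n-1)}[\sigma(xawa)]$ with $\zeta_{2^n}^{-Q_r(\sigma,xa)}\prod_{w\in W(r,n-1)}[\sigma(xa)wa]$), obtained there via the same two mechanisms---the recursion for $Q_r$ and the labeling identity of Lemma~\ref{lem:per_labeling}---but written out as a telescoping product rather than packaged as an induction on $i$.
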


\begin{proof}
 If equation~\eqref{eq:Pembed} holds for $\sigma\in G_\infty$ for every node $x\in \Orb_f^-(x_0)$, and for all $n\geq 1$, then $\sigma(\zeta_{2^n})=\zeta_{2^n}^{P_r(\sigma,x)}=\zeta_{2^n}^{P_r(\sigma,x_0)}$ for all $n\geq 1$. It follows that $P_r(\sigma,x)\equiv P_r(\sigma,x_0) \mod 2^n$ for all $n\geq 1$ and hence $\sigma\in M_{r,\infty}$. Further, if $\zeta_{2^n}\in K$ for all $n$, we must have $\sigma(\zeta_{2^n})=\zeta_{2^n}^{P_r(\sigma,x)}=\zeta_{2^n}$ for all $n$, hence $P_r(\sigma,x)=1$ and $\sigma\in B_{r,\infty}$.	 
	 
Thus, it suffices to show equation~\eqref{eq:Pembed} holds for an arbitrary $\sigma\in G_\infty$,
arbitrary $x\in \Orb_f^-(x_0)$, and arbitrary $n\geq 1$.
The desired equation is trivially true for $n=1$, as $\zeta_2=-1$ and $P_r\equiv 1 \pmod{2}$.
Therefore, we may assume for the rest of the proof that $n\geq 2$.
By Lemma~\ref{lem:per_labeling}, we can write
\[ \zeta_{2^n}= \frac{\prod_{w\in W(r,n-1)} [xawa]}{\prod_{w\in W(r,n-1)} [xbwa]}, \]
and hence
\begin{equation}\label{eq:per_sigma}
\sigma(\zeta_{2^n})= \frac{\prod_{w\in W(r,n-1)} [\sigma(xawa)]}{\prod_{w\in W(r,n-1)} [\sigma(xbwa)]}.
\end{equation}

We first claim for all $i\geq 0$ and any  word $w$ (in the alphabet $\{a,b\}$), we have
\begin{align}\label{eq:perstep1}
\prod_{w'\in W(r,i)}[\sigma(wa)w'a]&=
\zeta_{2^{i+1}}^{-\Par(\sigma,w)}\prod_{w'\in W(r,i)}[\sigma(w)aw'a],
\end{align}
where for $i=0$, we interpret this equation as saying $[\sigma(wa)]=\zeta_2^{-\Par(\sigma,w)} [\sigma(w)a]$.
When $\Par(\sigma,w)=1$, we have $\sigma(wa)=\sigma(w)b$, so equation~\eqref{eq:perstep1} follows from Lemma~\ref{lem:per_labeling} applied to $\sigma(w)$. On the other hand, when $\Par(\sigma,w)=0$, we have $\sigma(wa)=\sigma(w)a$ and hence the set of words in product on the left of the equation is the same as that on the right hand side, so equation~\eqref{eq:perstep1} is vacuously true in this case.

We also observe for any words $w ,w'$, 
\begin{equation}\label{eq:perstep2}
\prod_{w''\in \{a,b\}^{r-1}}[\sigma(ww'')w']=\prod_{w''\in \{a,b\}^{r-1}}[\sigma(w)w''w'],
\end{equation} 
since the set of words in each product is the same.

Write $S_{i}:={\sum_{w\in W(r,i)}\Par(\sigma,xaw)}$. Note, any $w\in W(r,j)$ can be written as $w_1aw_2a\dots w_j$ for $w_1,\dots,w_j\in \{a,b\}^{r-1}$. Then alternately applying equation~\eqref{eq:perstep1} and equation~\eqref{eq:perstep2}, we have
\begin{align*}
\prod_{w\in W(r,n-1)} [\sigma(xawa)]&=\prod_{w_1,\dots,w_{n-1}\in\{a,b\}} [\sigma(xaw_1aw_2a\dots w_{n-1}a)]\\ 
&= \zeta_2^{-S_{n-1}}\prod_{w_1,\dots,w_{n-1}\in\{a,b\}^{r-1}} [\sigma(xaw_1a\dots w_{n-1})a]\\ \\
&= \zeta_2^{-S_{n-1}}\prod_{w_1,\dots,w_{n-1}\in\{a,b\}^{r-1}} [\sigma(xaw_1a\dots w_{n-2}a)w_{n-1}a]\\ 
&= \zeta_2^{-S_{n-1}}\zeta_{2^2}^{-S_{n-2}} \prod_{w_1,\dots,w_{n-1}\in\{a,b\}^{r-1}} [\sigma(xaw_1a\dots w_{n-2})aw_{n-1}a]\\
&\quad \vdots\\
&= \zeta_2^{-S_{n-1}}\zeta_{2^2}^{-S_{n-2}}\dots \zeta_{2^{n-1}}^{-S_{1}}\prod_{w_1,\dots,w_{n-1}\in\{a,b\}^{r-1}} [\sigma(xaw_1)aw_2a\dots w_{n-1}a]\\ 
&= \zeta_2^{-S_{n-1}}\zeta_{2^2}^{-S_{n-2}}\dots \zeta_{2^{n-1}}^{-S_{1}} \prod_{w_1,\dots,w_{n-1}\in\{a,b\}^{r-1}} [\sigma(xa)w_1aw_2a\dots w_{n-1}a]\\
\end{align*}

Using the fact that $\zeta_{2^{n-i}}=(\zeta_{2^n})^{2^i}$, we can rewrite
\[\zeta_2^{-S_{n-1}}\zeta_{2^2}^{-S_{n-2}}\cdots \zeta_{2^{n-1}}^{-S_{1}}
=\zeta_{2^n}^{- (2S_1 + 4 S_2 + \cdots + 2^{n-1} S_{n-1})}
=\zeta_{2^n}^{-Q_r(\sigma,xa)},\]
where the last equation follows from the identity
$\sum_{i=1}^{n-1}2^{i}S_{i}\equiv Q_r(\sigma,xa) \pmod{2^n}$. Thus, we have
\begin{equation}\label{eq:per_aprod}
\prod_{w\in W(r,n-1)} [\sigma(xawa)]=\zeta_{2^n}^{-Q_r(\sigma,xa)}\prod_{w\in W(r,n-1)} [\sigma(xa)wa].
\end{equation}
Similarly,
\begin{equation}\label{eq:per_bprod}
\prod_{w\in W(r,n-1)} [\sigma(xbwa)]=\zeta_{2^n}^{-Q_r(\sigma,xb)}\prod_{w\in W(r,n-1)} [\sigma(xb)wa].
\end{equation}

Plugging equation~\eqref{eq:per_aprod} and equation~\eqref{eq:per_bprod} into equation~\eqref{eq:per_sigma}, we see 
\[\sigma(\zeta_{2^n})= \zeta_{2^n}^{-Q_r(\sigma,xa)+Q_r(\sigma,xb)}\frac{\prod_{w\in W(r,n-1)} [\sigma(xa)wa]}{\prod_{w\in W(r,n-1)} [\sigma(xb)wa]}.\]
Finally, applying Lemma~\ref{lem:per_labeling} one last time, we see
\[\frac{\prod_{w\in W(r,n-1)} [\sigma(xa)wa]}{\prod_{w\in W(r,n-1)} [\sigma(xb)wa]}=\left(\frac{\prod_{w\in W(r,n-1)} [\sigma(x)awa]}{\prod_{w\in W(r,n-1)} [\sigma(x)bwa]}\right)^{(-1)^{\Par(\sigma,x)}}=\zeta_{2^n}^{(-1)^{\Par(\sigma,x)}}.\]
Thus,
\[\sigma(\zeta_{2^n}) = \zeta_{2^n}^{(-1)^{\Par(\sigma,x)}-Q_r(\sigma,xa)+Q_r(\sigma,xb)}=\zeta_{2^n}^{P_r(\sigma,x)}.
\qedhere \]
\end{proof}

%%%%%%%%%%%%%%%%%%%%%%%%%%%%%%%%%%%%%%%%%%%%%%%%%%%%%%%%%%%%%%%%%%%%%%%%%%%%%%%
%%%%%%%%%%%%%%%%%%%%%%%%%%%%%%%%%%%%%%%%%%%%%%%%%%%%%%%%%%%%%%%%%%%%%%%%%%%%%%%

\section{The arithmetic and geometric Galois groups}
\label{sec:PinkGroups}
Let $k$ be an arbitrary field not of characteristic~2.
In this section, let $K=k(t)$ where $t$ is transcendental over $k$, let $x_0=t$, let $K_\infty$ be the resulting arboreal extension of $K$, and $G_\infty = \Gal(K_\infty/K)$, the corresponding arboreal Galois group.
In addition,
let $\kbar$ be an algebraic closure of $k$, let $K':=\kbar(t)$, let $K'_{\infty}$ be the corresponding arboreal extension of $K'$ (with root point $x_0=t$), and let $G'_\infty:=\Gal(K'_{\infty}/K')$ be the corresponding arboreal Galois group for this extension.
The groups $G_\infty$ and $G'_\infty$ are called the \textit{arithmetic} and \textit{geometric} arboreal Galois groups for $f$ over $k$, and they are denoted $G^{\arith}$ and $G^{\geom}$ respectively. 
 
Note that $K'=k(t)\cdot \bar{k}$ and $K'_\infty = K_\infty \cdot \bar{k}$.
Hence, if we define $k_\infty := K_\infty \cap \bar{k}$, then the map $G^{\geom}\rightarrow \Gal(K_\infty/k_\infty(t))$ given by restricting elements to $K_\infty$ is an isomorphism. Composing this isomorphism with the injection $\Gal(K_\infty/k_\infty(t))\rightarrow G^{\arith}$ produces a natural injection $G^{\geom}\rightarrow G^{\arith}$ with cokernel isomorphic to $\Gal(k_\infty/k)$. That is, we have an exact sequence
\[ 0 \longrightarrow G^{\geom} \longrightarrow G^{\arith} \longrightarrow \Gal(k_\infty/k) \longrightarrow 0 .\]

%%%%%%%%%%%%%%%%%%%%%%%%%%%%%%%%%%%%%%%%%%%%%%%%%%%%%%%%%%%%%%%%%%%%%%%%%%%%%%%

\subsection{The geometric Galois group}
In this subsection, we show that for each $r\geq 1$, our group $B_{r,\infty}$
coincides with Pink's group $G^{\Pink}_{r,\infty}$.
Recall that in \cite{PinkPCF}, Pink proves that this group (which he denotes simply as $G$)
is isomorphic to $G^{\geom}$, for any field $k$ as above.
%defined to be the arboreal Galois group $G_{\infty}$ in the case that $K=\kbar(t)$
%for an algebraically closed field $\kbar$, with root point $x_0=t$.

In \cite[Equation~(2.0.1)]{PinkPCF},
Pink describes $G^{\Pink}_{r,\infty}$ as
the closure of the subgroup of $\Aut(T_{\infty})$ generated by elements
%a subgroup $G\subseteq\Aut(T_{\infty})$,
%which we denote $G^{\Pink}_{r,0,\infty}$ or simply 
%as the closure of the subgroup generated by elements
$\mapa_1,\ldots,\mapa_r\in\Aut(T_{\infty})$ given by the recursive relations
\begin{equation}
\label{eq:PinkPerDef}
\mapa_1 = (\mapa_r, 1) \mapt,
\qquad\text{and}\qquad
\mapa_i = (\mapa_{i-1}, 1) \quad \text{for } 2\leq i \leq r .
\end{equation}
Here, $\mapt\in\Aut(T_{\infty})$
denotes the automorphism of order two swapping the subtrees based at $a$ and $b$, given by
\[ \mapt(aw) = bw \quad\text{and}\quad  \mapt(bw) = aw \]
for all  words $w\in\{a,b\}^{\NN}$. 
In addition, for any $\sigma_a, \sigma_b \in \Aut(T_{\infty})$, the automorphism
$(\sigma_a,\sigma_b)$ is the element $\sigma\in\Aut(T_\infty)$ given by
\[ \sigma(aw) = a\sigma_a(w) \quad\text{and}\quad \sigma(bw) = b\sigma_b(w) .\]

Let $w_r$ be the word of length $r$ given by $w_r:=b a^{r-1}$,
where $a^n$ denotes $n$ copies of the symbol $a$.
A straightforward induction shows that for each $i=1,\ldots, r$,
the automorphism $\mapa_i$ of equation~\eqref{eq:PinkPerDef} is given by
\begin{equation}
\label{eq:Parmapa}
\Par(\mapa_i, w) = \begin{cases}
1 & \text{ if } w = a^{i-1} w_r^n \text{ for some } n\geq 0, \\
0 & \text{ otherwise,}
\end{cases}
\end{equation}
for any word $w$,
where $w_r^n$ is the word $w_r \cdots w_r$ of length $nr$ consisting of $n$ copies of $w_r$.
For example, for $r=3$ and $i=2$, we have $\Par(\mapa_2, w)=1$ for
\[ w= a, \; abaa, \; abaabaa, \; abaabaabaa, \ldots \]
and $\Par(\mapa_2,w)=0$ otherwise.

\begin{prop}
\label{prop:PinkSubPer}
For every integer $r\geq 1$, Pink's subgroup $G^{\Pink}_{r,\infty}$ is contained in $B_{r,\infty}$.
\end{prop}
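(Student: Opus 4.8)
The plan is to show that each generator $\mapa_i$ ($1 \le i \le r$) of Pink's group lies in $B_{r,\infty}$, and then invoke the fact (Theorem~\ref{thm:Mgroup}) that $M_{r,\infty}$ is a group and $B_{r,\infty}$ is the kernel of the homomorphism $P_r$, hence a closed subgroup; since $G^{\Pink}_{r,\infty}$ is the closure of the group generated by the $\mapa_i$, it will then follow that the whole group is contained in $B_{r,\infty}$. So the entire content reduces to the single computation: for each $i$ and each node $x$, verify $P_r(\mapa_i, x) = 1$. By Definition~\ref{def:Mn} this simultaneously proves $\mapa_i \in M_{r,\infty}$ and $P_r(\mapa_i) = 1$, i.e.\ $\mapa_i \in B_{r,\infty}$.

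To carry out that computation I would use the explicit parity formula~\eqref{eq:Parmapa}: $\Par(\mapa_i, w) = 1$ precisely when $w = a^{i-1} w_r^n$ for some $n \ge 0$, where $w_r = b a^{r-1}$. Unwinding~\eqref{eq:Pdef}, we must evaluate
\[
P_r(\mapa_i, x) = (-1)^{\Par(\mapa_i,x)} + Q_r(\mapa_i, xb) - Q_r(\mapa_i, xa),
\]
where $Q_r(\mapa_i, y) = \sum_{i' \ge 1} 2^{i'} \sum_{w \in W(r,i')} \Par(\mapa_i, yw)$. The key observation is that the words $w$ on which $\mapa_i$ has odd parity are extremely sparse and rigidly structured: they all begin with $a^{i-1}$ and then consist of blocks $b a^{r-1}$. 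One should split into cases according to whether the node $x$ itself, read as a suffix-compatible prefix of such a special word, forces contributions. Concretely: (i) if $x$ is \emph{not} a prefix of any $a^{i-1} w_r^n$ and does not \emph{end} in a way that could extend to one, then $\Par(\mapa_i,x)=0$ and essentially no node above $xa$ or $xb$ has odd parity in the relevant positions, so all terms vanish except the leading $+1$; (ii) the interesting case is when $x$ sits exactly at a "block boundary'' so that one of $xa$, $xb$ leads into a run of special words while the other does not. Because a special word continues with $a$ after an $a$-run but is forced to have $a$ in positions divisible by $r$ (matching the constraint defining $W(r,i')$), one checks that the odd-parity nodes above the "good'' child $xa$ align with exactly one word $w \in W(r,i')$ for each $i'$, contributing $\sum_{i'\ge 1} 2^{i'} = \sum 2^{i'}$ — but this telescopes $2$-adically against the $(-1)^{\Par}$ term. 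The bookkeeping is: when $\Par(\mapa_i,x)=1$, the leading term is $-1$, and $Q_r(\mapa_i,xa)$ picks up $2 + 4 + 8 + \cdots = -2$ in $\ZZ_2$ while $Q_r(\mapa_i,xb)=0$, giving $-1 - (-2) + 0 = 1$; when $\Par(\mapa_i,x)=0$ either both $Q_r$ terms vanish (giving $1$) or the roles shift so that $xb$ carries the run and one gets $1 + 0 - (\text{something})$, which must again be checked to be $1$.

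The main obstacle I anticipate is precisely this last piece of $2$-adic bookkeeping: correctly identifying, for each child $xa$ and $xb$ and each $i' \ge 1$, exactly how many words $w \in W(r,i')$ satisfy $\Par(\mapa_i, xcw) = 1$ (for $c \in \{a,b\}$), and verifying that the resulting geometric-type series $\sum 2^{i'}(\text{count}_{i'})$ combines with $(-1)^{\Par(\mapa_i,x)}$ to give exactly $1 \in \ZZ_2^\times$. The structure of $W(r,i')$ — words of length $ri'-1$ with symbol $a$ in every position divisible by $r$ — meshes with the block form $b a^{r-1}$ of $w_r$ in a way that should make the count at level $i'$ either $0$ or $1$, so the series is either $0$ or $\sum_{i'\ge i_0}2^{i'} = -2^{i_0}$ for an appropriate starting index; one then checks the two contributions from $xa$ and $xb$ cancel down to $1$. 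I would organize the verification by fixing $i$, writing a node $x$ in terms of how its suffix overlaps the pattern $a^{i-1}(b a^{r-1})^*$, and treating the at most a handful of resulting cases; each is a short finite check once the pattern-matching is set up, but getting the pattern-matching exactly right across the $\lfloor\cdot\rfloor$-type indexing of $W(r,i')$ is where care is needed.
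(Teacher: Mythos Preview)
Your overall strategy matches the paper's exactly: reduce to checking $P_r(\mapa_i,x)=1$ for each generator $\mapa_i$ and each node $x$, using that $B_{r,\infty}$ is closed. The case split on $\Par(\mapa_i,x)\in\{0,1\}$ is also what the paper does. However, your execution of both cases is off, and the error is precisely in the ``$2$-adic bookkeeping'' you flagged.

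In the case $\Par(\mapa_i,x)=1$ (so $x=a^{i-1}w_r^m$): you claim $Q_r(\mapa_i,xa)=2+4+8+\cdots=-2$ and $Q_r(\mapa_i,xb)=0$. Both are wrong. First, the child carrying the odd-parity nodes is $xb$, not $xa$: the nodes above $x$ with odd parity are $xw_r^n=xba^{r-1}\cdots$, all lying above $xb$. Second, and more importantly, there is \emph{no} infinite series. For $j\geq 2$, the unique candidate word $w$ with $xbw=xw_r^j$ is $w=a^{r-1}(ba^{r-1})^{j-1}$, which has the symbol $b$ in position $r$; but membership in $W(r,j)$ forces position $r$ to be $a$. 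Hence only the single term $j=1$ survives, giving $Q_r(\mapa_i,xb)=2$, $Q_r(\mapa_i,xa)=0$, and $P_r=-1+2-0=1$. You reached $1$ only because two errors cancelled.

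In the case $\Par(\mapa_i,x)=0$: there is no ``roles shift'' subcase to worry about. If some node $y=xcw$ with $c\in\{a,b\}$ and $w\in W(r,j)$ had $\Par(\mapa_i,y)=1$, then $y=a^{i-1}w_r^{m'}$ for some $m'$; since $|y|-|x|=rj$, one gets $|x|=(i-1)+(m'-j)r$ with $m'-j\geq 0$, and then $x$, being the prefix of $y$ of that length, equals $a^{i-1}w_r^{m'-j}$, contradicting $\Par(\mapa_i,x)=0$. So both $Q_r$ terms vanish outright and $P_r=1+0-0=1$.
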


\begin{proof}
Observe that for any node $x$ of the tree, the map $\sigma\mapsto P_r(\sigma,x)$ is a continuous
function from $\Aut(T_{\infty})$ to $\ZZ_2^\times$. Indeed, if $\sigma_1,\sigma_2\in \Aut(T_{\infty})$
agree on the finite subtree extending $nr$ levels above $x$, then 
$P_r(\sigma_1,x) \equiv P_r(\sigma_2,x) \pmod{2^{n+1}}$.
It follows that $M_{r,\infty}$ and $B_{r,\infty}$ are closed subgroups of $\Aut(T_{\infty})$.
Thus, it suffices to prove that each of the generators $\mapa_1,\ldots,\mapa_r$
of Pink's group belongs to $B_{r,\infty}$.

Fix a node $x$ of the tree, and an integer $i\in \{ 1,\ldots, r\}$. We must show that $P_r(\mapa_i,x)=1$.
We consider two cases.

First, suppose that $\Par(\mapa_i,x)=1$. By equation~\eqref{eq:Parmapa},
we must have $x=a^{i-1} w_r^m$ for some integer $m\geq 0$.
Thus, the nodes $y$ above $x$ for which $\Par(\mapa_i,y)=1$ are those of the form
$y=x w_r^n$ for some $n\geq 0$.
On the other hand, according to Definition~\ref{def:Pmap},
the value of $P_r(\mapa_i,x)$ is $(-1)^{\Par(\mapa_i,x)}=-1$ plus a weighted
sum of $\Par(\mapa_i,y')$ for nodes $y'$ of the form $y'=xaw$ or $y'=xbw$
for $w\in W(r,j)$ with $j\geq 1$.
However, none of the strings $w_r^n$ begin with $a$, and all the ones with $n\geq 2$
have length greater than $r$, with $b$ rather than $a$ as their $(r+1)$-st symbol.
Thus, the only string of the form $w_r^n$ in the sets $aW(r,j)$ or $bW(r,j)$ is $w_r=ba^{r-1}$ itself. Therefore,
\[ P_r(\mapa_i,x) = (-1)^{\Par(\mapa_i,x)} + Q_r(\mapa_i,xb) - Q_r(\mapa_i,xa)
= -1 + 2 - 0 = 1.\]

Second, we are left with the case that $\Par(\mapa_i,x)=0$.
By equation~\eqref{eq:Parmapa}, $x$ is \emph{not} of the form $a^{i-1} w_r^m$,
and therefore none of the nodes at any level $nr$ above $x$ are of this form, either.
Therefore, $\Par(\mapa_i,y)=0$ for all nodes $y$ appearing in the formula for $P_r(\mapa_i,x)$
in Definition~\ref{def:Pmap}, and hence
\[ P_r(\mapa_i,x) = (-1)^{\Par(\mapa_i,x)} + Q_r(\mapa_i,xb) - Q_r(\mapa_i,xa)
= 1 + 0 - 0 = 1. \qedhere \]
\end{proof}

In fact, we wish to extend Proposition~\ref{prop:PinkSubPer}
to show that $G^{\Pink}_{r,\infty}=B_{r,\infty}$.
To this end, for each integer $n\geq 1$, define
\[ B_{r,n}:= \res_{\infty,n}( B_{r,\infty} )
\quad\text{and}\quad
G^{\Pink}_{r,n}:=\res_{\infty,n}( G^{\Pink}_{r,\infty}) , \]
as we did at the end of Subsection~\ref{ssec:label}. We must show these subgroups of $\Aut(T_n)$ coincide.

\begin{thm}
\label{thm:PinkPer}
For every integer $r\geq 1$, we have $G^{\Pink}_{r,\infty}=B_{r,\infty}$.
\end{thm}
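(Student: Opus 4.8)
\textbf{Proof strategy for Theorem~\ref{thm:PinkPer}.} By Proposition~\ref{prop:PinkSubPer} we already have $G^{\Pink}_{r,\infty}\subseteq B_{r,\infty}$, and both are closed subgroups of $\Aut(T_\infty)$. Since $B_{r,\infty}$ is the kernel of the continuous homomorphism $P_r$, it is the inverse limit of the finite groups $B_{r,n}$; likewise $G^{\Pink}_{r,\infty}$ is the inverse limit of the $G^{\Pink}_{r,n}$. Hence it suffices to show $G^{\Pink}_{r,n}=B_{r,n}$ as subgroups of $\Aut(T_n)$ for every $n\geq 1$, and in fact, since $|G^{\Pink}_{r,n}|\le |B_{r,n}|$ is automatic from the inclusion, it is enough to prove the reverse inequality on orders, $|B_{r,n}|\le |G^{\Pink}_{r,n}|$, for all $n$.

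The plan is to count both sides by induction on $n$, comparing the sizes of consecutive quotients $|G^{\Pink}_{r,n}|/|G^{\Pink}_{r,n-1}|$ and $|B_{r,n}|/|B_{r,n-1}|$, i.e.\ the sizes of the kernels of the restriction maps $\res_{n,n-1}$ on each group. For $B_{r,n}$, an element of $\ker(\res_{n,n-1}\colon B_{r,n}\to B_{r,n-1})$ is determined by the $2^{n-1}$ parities $\Par(\sigma,x)$ at the level-$(n-1)$ nodes $x$, and the single constraint imposed by belonging to $M_{r,n}$ is that $P_r(\sigma,x_1)=P_r(\sigma,x_2)$ for all nodes; for these ``top-level-only'' automorphisms, $P_r(\sigma,x)=1$ whenever $x$ is at a level where no $W(r,i)$ words fit above it, and $P_r(\sigma,x)=1\pm2(\text{something})$ at the relevant lower levels, so one reads off exactly how many linear conditions mod the relevant power of $2$ are being imposed; this gives a clean formula for $|B_{r,n}/B_{r,n-1}|$ as a power of $2$ that jumps only at levels $n\equiv 1\pmod r$ (where a new $W(r,i)$ becomes active). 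For $G^{\Pink}_{r,n}$, one uses the recursive wreath-product structure of the generators $\mapa_1,\dots,\mapa_r$ in equation~\eqref{eq:PinkPerDef}: writing $\sigma\in G^{\Pink}_{r,\infty}$ as $(\sigma_a,\sigma_b)\mapt^{\epsilon}$ and tracking which of $\sigma_a,\sigma_b$ lies in $G^{\Pink}_{r,\infty}$ versus $B_{r-1,\infty}$-type subgroups at the first level (the $\mapt$ only appears via $\mapa_1$), one gets a recursion for $|G^{\Pink}_{r,n}|$ in terms of smaller cases. The two counts must be shown to agree.

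Concretely I would: (i) record that $B_{r,n}=M_{r,n}\cap B_{r,\infty}$-image and that $|M_{r,n}|=2\,|B_{r,n}|$ for $n$ large enough (the factor $2$ from the value of $P_r$), reducing everything to counting $B$; (ii) compute $|B_{r,n}|$ explicitly by the parity-constraint analysis above, obtaining $|B_{r,n}|=2^{(2^n-1)-e(n)}$ for an explicit exponent $e(n)$ counting the cumulative number of $P_r$-consistency conditions through level $n$; (iii) independently compute $|G^{\Pink}_{r,n}|$ from the self-similar description of $\mapa_1,\dots,\mapa_r$, perhaps most cleanly by identifying a set of free parameters for an element of $G^{\Pink}_{r,n}$ (the parities that may be chosen arbitrarily) and showing their count matches $e(n)$'s complement; (iv) conclude $|B_{r,n}|=|G^{\Pink}_{r,n}|$ for all $n$, hence equality of the groups, hence (passing to the limit) $B_{r,\infty}=G^{\Pink}_{r,\infty}$.

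The main obstacle is step (iii): extracting a clean order formula for $G^{\Pink}_{r,n}$ from Pink's recursive generators. The subtlety is that the generators are not independent in an obvious way, and the natural recursion mixes ``full'' copies of $G^{\Pink}_{r,\infty}$ with the subgroup of automorphisms of the subtree that are ``rigid for the first $r-1$ levels'' (those built from $\mapa_2,\dots,\mapa_r$ alone, which act trivially until level $r-1$ and then as a shifted copy of the whole group); keeping careful track of these at each level, and proving that the induced constraints match exactly the $P_r$-consistency conditions counted in step (ii) rather than merely bounding them, is where the real work lies. An alternative that sidesteps some of the bookkeeping is to show directly that $B_{r,\infty}$ is (topologically) generated by conjugates and products of $\mapa_1,\dots,\mapa_r$ — e.g.\ by exhibiting, for each level-$(n-1)$ node $x$, an element of $B_{r,\infty}$ with prescribed parity at $x$ and controllable parities elsewhere, built from the $\mapa_i$ — but this still requires the same order count as a sanity check, so I expect the counting argument to be unavoidable.
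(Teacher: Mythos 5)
Your overall strategy matches the paper's: reduce to a level-by-level order count, showing $|B_{r,n}|\le|G^{\Pink}_{r,n}|$, and do so by induction comparing the kernels of the restriction maps $\res_{n,n-1}$ on each side. Your analysis of the $B$-side kernel (parity constraints becoming active at levels $n-1-ri$, one independent parity restriction per node there, jumping at $n\equiv1\pmod r$) is also the computation the paper carries out, though the paper works with an auxiliary group $B'_{r,n}$ defined directly by the truncated $P_r$-congruences rather than with $B_{r,n}=\res_{\infty,n}(B_{r,\infty})$ itself; this sidesteps the question of whether every automorphism satisfying the finite conditions lifts, at no cost since $B_{r,n}\subseteq B'_{r,n}$.

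The genuine gap is exactly the step you flag as ``the main obstacle'': extracting an order formula for $G^{\Pink}_{r,n}$. You propose to re-derive it from the recursive wreath-product description of $\mapa_1,\ldots,\mapa_r$, which is doable but nontrivial; the paper simply quotes Pink's closed-form result \cite[Proposition 2.3.1]{PinkPCF},
\[
\log_2 \big|G^{\Pink}_{r,n}\big| \;=\; 2^n-1-\sum_{m=0}^{n-1} 2^{\,n-1-m}\Big\lfloor \tfrac{m}{r}\Big\rfloor,
\]
and then verifies by a short telescoping computation that the increment $\log_2|G^{\Pink}_{r,n}|-\log_2|G^{\Pink}_{r,n-1}|$ equals $2^{n-1}-\sum_{i=1}^{\lfloor (n-1)/r\rfloor}2^{n-1-ir}$, matching the $B$-side kernel count. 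So the ingredient you were missing is a citation, not a new idea; once you have Pink's formula, your plan closes. One minor error to correct: you assert $|M_{r,n}|=2|B_{r,n}|$ for large $n$. In fact $M_{r,n}/B_{r,n}\cong(\ZZ/2^{e}\ZZ)^\times$ with $e=\lfloor (n-1)/r\rfloor+1$ (this is what makes Corollary~\ref{65} work), so the index grows with $n$; fortunately you don't actually use this claim, since you can and should count $B_{r,n}$ (or $B'_{r,n}$) directly.
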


\begin{proof}
For each integer $n\geq 1$, define
\[ B'_{r,n}:= \big\{ \sigma\in \Aut(T_n) : \forall m<n \text{ and } \forall x\in\{a,b\}^m, \,
P_r(\sigma,x)\equiv 1 \,(\textup{mod }2^{e(m,n)} ) \big\} \]
where $e(m,n):=\lfloor \frac{n-1-m}{r}\rfloor+1$.
(Here, working on the finite tree $T_n$,
we understand the sum defining $P_r(\sigma,x)$ in Definition~\ref{def:Mn}
to be truncated to include only those terms that make sense,
i.e., only those $\Par(\sigma,y)$ terms for $y$ at level $n-1$ or below.)
Observe, in light of Proposition~\ref{prop:PinkSubPer} and Definition~\ref{def:Mn}, that we have
\[ G^{\Pink}_{r,n} \subseteq B_{r,n} \subseteq B'_{r,n} . \]
Thus, it suffices to show that $|B'_{r,n}| \leq |G^{\Pink}_{r,n}|$ for all $n\geq 1$.

We proceed by induction on $n$. For $n\leq r$, we have $e(m,n)=1$ for all $m<n$,
and hence $B'_{r,n}= \Aut(T_n)$. Therefore,
\[\log_2|B'_{r,n}|=\log_2|\Aut(T_n)|= 2^n-1=\log_2|G^{\Pink}_{r,n}|,\]
where the final equality is by \cite[Proposition 2.3.1]{PinkPCF}.

Now let $n\geq r+1$, and suppose $|B'_{r,n-1}|\leq |G^{\Pink}_{r,n-1}|$. 
Let $S_{r,n}$ denote the kernel of the map $\res_{n,n-1} : B'_{r,n}\rightarrow B'_{r,n-1}$.
Then
\[ |B'_{r,n}| = |\res_{n,n-1}(B'_{r,n})|\cdot |S_{r,n}|\leq |B'_{r,n-1}|\cdot |S_{r,n}| .\]

Next, we compute the size of $S_{r,n}$.
Define $Y_n$ to be the kernel of the map $\res_{n,n-1}:\Aut(T_n)\to\Aut(T_{n-1})$;
that is, $Y_n$ is the set of $\sigma\in\Aut(T_n)$
for which $\Par(\sigma,y)=0$ for every node $y$ below level $n-1$.
Thus, $S_{r,n}$ is the set of $\sigma\in Y_n$ for which
\begin{equation}
\label{eq:condBper}
P_r(\sigma,x)\equiv 1 \pmod{2^{e(n,m)}}
\end{equation}
for all $0\leq m<n$ and all $x\in\{a,b\}^m$.

Observe, for any $\sigma\in Y_n$, any integer $0\leq m<n$, and any node $x$ at level $m$, we have
\[ P_r(\sigma,x)\equiv
%(-1)^{\Par(\sigma,x)}
1 + \sum_{i=1}^{e(n,m)-1}
2^i\sum_{w\in W(r,i)}\big[\Par(\sigma,xbw)-\Par(\sigma,xaw) \big] \pmod{2^{e(n,m)}} .\]
Note that unless $m=n-ri-1$ for some $i$, then condition~\eqref{eq:condBper}
is automatically satisfied by the assumption that $\sigma$ acts trivially on all of $T_{n-1}$. 

Furthermore, even when $m=n-ri-1$ for some $i$, then again for
$\sigma\in Y_n$, condition~\eqref{eq:condBper} reduces to
\[ P_r(\sigma,x)\equiv 1+ 2^i\sum_{w\in W(r,i)}\big[\Par(\sigma,xbw)-\Par(\sigma,xaw) \big]\equiv 1 \pmod{2^{i+1}} .\]
Thus, for any $\sigma\in Y_n$, we have $\sigma\in S_{r,n}$ if and only if
\begin{equation}
\label{eq:Jcondsum}
\sum_{w\in W(r,i)}\Par(\sigma,xbw)-\sum_{w\in W(r,i)}\Par(\sigma,xaw)
\quad \text{is even}
\end{equation}
for each $i=1,2,\ldots,\ell$, and for each node $x$ at level $n-1-ri$,
where $\ell:=\lfloor \frac{n-1}{r}\rfloor$.

To determine whether a given $\sigma\in Y_n$ belongs to $S_{r,n}$,
start with $i=\ell$ and count down to $i=1$.
For each node $x$ at level $n-1-ri$, the values of
$\Par(\sigma,xbw)$ and $\Par(\sigma,xaw)$ for $w\in W(r,i)$
can be arbitrary except for $\Par(\sigma,xba^{ri-1})$,
which must be chosen so that the sum in condition~\eqref{eq:Jcondsum} is even.
Since there are $2^{n-1-ri}$ nodes at this level, we have $2^{n-1-ri}$ parity restrictions
arising from level $i$.

Furthermore, note that parity of $\sigma$ at
$xba^{ri-1}$ did not arise in the sum in condition~\eqref{eq:Jcondsum}
for any previous values of $i$ or $x$, because of the appearance of the symbol $b$
in that particular location.
Thus, the parity restrictions noted above are all independent of one another.
With $\sigma\in Y_n$ determined by the parities $\Par(\sigma,y)$ for each
of the $2^{n-1}$ nodes $y$ at level $n-1$, and
with $2^{n-1-ri}$ such restrictions for each $i=1,2,\ldots,\ell$, it follows that
\[\log_2|S_{r,n}|=2^{n-1}-\sum_{i=1}^{\ell} 2^{n-1-ir}.\]

On the other hand, by \cite[Proposition 2.3.1]{PinkPCF}, we have
\[ \log_2 |G^{\Pink}_{r,n}|=
2^n-1-\sum_{m=0}^{n-1} 2^{n-1-m}\cdot \left\lfloor \frac{m}{r}\right\rfloor. \]
Therefore,
\begin{align*}
\log_2\big|G^{\Pink}_{r,n}\big| &- \log_2\big|G^{\Pink}_{r,n-1}\big| \\
&= \left(2^n-1-\sum_{m=0}^{n-1} 2^{n-1-m}\cdot \left\lfloor \frac{m}{r}\right\rfloor\right)-\left(2^{n-1}-1-\sum_{m=0}^{n-2} 2^{n-2-m}\cdot \left\lfloor \frac{m}{r}\right\rfloor\right)\\
&= 2^{n-1}-\sum_{m=0}^{n-1} 2^{n-1-m}\cdot \left\lfloor \frac{m}{r}\right\rfloor+\sum_{m=1}^{n-1} 2^{n-2-(m-1)}\cdot \left\lfloor \frac{m-1}{r}\right\rfloor\\
&=2^{n-1} -2^{n-1-0}\left\lfloor\frac{0}{r}\right\rfloor+\sum_{m=1}^{n-1} 2^{n-1-m}\left(\left\lfloor\frac{m-1}{r}\right\rfloor - \left\lfloor \frac{m}{r}\right\rfloor\right)\\
&=2^{n-1}-\sum_{i=1}^{\left\lfloor\frac{n-1}{r}\right\rfloor} 2^{n-1-ir}
= \log_2|S_{r,n}|.
\end{align*}
It follows that $\log_2|G^{\Pink}_{r,n-1}|+\log_2 |S_{r,n}| = \log_2|G^{\Pink}_{r,n}|$, and hence
\[ \log_2\big|B'_{r,n}\big| \leq \log_2 |B'_{r,n-1}|+\log_2 |S_{r,n}|
\leq \log_2|G^{\Pink}_{r,n-1}| + \log_2 |S_{r,n}| =\log_2 |G^{\Pink}_{r,n}|. \qedhere \]
%as desired.
\end{proof}

%%%%%%%%%%%%%%%%%%%%%%%%%%%%%%%%%%%%%%%%%%%%%%%%%%%%%%%%%%%%%%%%%%%%%%%%%%%%%%%

\subsection{The arithmetic Galois group}
\label{ssec:arith}

For our field $k$ not of characteristic~2,
let $\mu_{2^\infty}$ denote the set of all $2$-power roots of unity in $\kbar$.

\begin{lemma}\label{lem:k_inf} With notation as at the start of Section~\ref{sec:PinkGroups},
for  any polynomial $f(z)=z^2+c\in k[z]$ with $0$ periodic,
we have $k_\infty = k(\mu_{2^\infty})$.
\end{lemma}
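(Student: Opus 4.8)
The plan is to prove the two inclusions $k(\mu_{2^\infty})\subseteq k_\infty$ and $k_\infty\subseteq k(\mu_{2^\infty})$ separately, where $k_\infty = K_\infty\cap\bar k$ with $K = k(t)$, $x_0 = t$.

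\textbf{The inclusion $k(\mu_{2^\infty})\subseteq k_\infty$.} This is the easy direction, and it is essentially a consequence of Lemma~\ref{lem:per_2nroots} (or Lemma~\ref{lem:per_labeling}). Applied with the root point $x_0 = t$, these lemmas exhibit, for each $n\geq 1$, a primitive $2^n$-th root of unity $\zeta_{2^n}$ as an explicit ratio of products of elements of $\Orb_f^-(t)$, hence as an element of $K_\infty = k(t)_\infty$. Since roots of unity are algebraic over the prime field, $\zeta_{2^n}\in\bar k$, and therefore $\zeta_{2^n}\in K_\infty\cap\bar k = k_\infty$. As this holds for all $n$, we get $\mu_{2^\infty}\subseteq k_\infty$, and since $k_\infty$ is a field containing $k$, we conclude $k(\mu_{2^\infty})\subseteq k_\infty$.

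\textbf{The inclusion $k_\infty\subseteq k(\mu_{2^\infty})$.} This is the substantive direction and the main obstacle. The idea is to pass to the geometric picture: over $\bar k(t)$, the geometric arboreal Galois group $G^{\geom}$ equals $B_{r,\infty}$ by Theorem~\ref{thm:PinkPer}, while the arithmetic group $G^{\arith} = G_\infty$ over $k(t)$ sits in the exact sequence
\[ 0 \longrightarrow G^{\geom} \longrightarrow G^{\arith} \longrightarrow \Gal(k_\infty/k) \longrightarrow 0 .\]
By Theorem~\ref{thm:Pembed}, $G^{\arith}\subseteq M_{r,\infty}$, and the map $P_r\colon M_{r,\infty}\to\ZZ_2^\times$ restricted to $G^{\arith}$ records exactly how $\sigma$ acts on $\mu_{2^\infty}$ via $\sigma(\zeta_{2^n}) = \zeta_{2^n}^{P_r(\sigma)}$. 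Since $B_{r,\infty} = \ker P_r$ by Theorem~\ref{thm:Mgroup}, the composite $G^{\arith}\to\Gal(k_\infty/k)$ has the same kernel as $P_r|_{G^{\arith}}$; that is, an element of $G^{\arith}$ acts trivially on $k_\infty(t)$ if and only if it acts trivially on $\mu_{2^\infty}$. But $k_\infty\supseteq k(\mu_{2^\infty})$ is generated over $k$ by roots of unity (from the first inclusion), so an element of $\Gal(\bar k/k)$ fixing $\mu_{2^\infty}$ fixes $k(\mu_{2^\infty})$; combined with the kernel computation, this forces $k_\infty = k(\mu_{2^\infty})$. Concretely: the field $k_\infty$ is the fixed field inside $\bar k$ of the subgroup of $\Gal(\bar k/k)$ acting trivially on $K_\infty$; I would show this subgroup coincides with the one acting trivially on $\mu_{2^\infty}$ by lifting automorphisms of $\bar k$ to automorphisms of $\bar k(t)$ fixing $t$, using that $K_\infty' = K_\infty\cdot\bar k$ and that the $\Gal(\bar k/k)$-action on $K_\infty/k(\mu_{2^\infty})(t)$ is the one detected by $P_r$.

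\textbf{Organizing the argument.} I would first record that $\mu_{2^\infty}\subseteq K_\infty$ (first inclusion), so that $k(\mu_{2^\infty})(t)\subseteq K_\infty$ and $k(\mu_{2^\infty})\subseteq k_\infty$. Then, to get equality, it suffices to show $[k_\infty : k(\mu_{2^\infty})] = 1$, equivalently that $\Gal(\bar k/k(\mu_{2^\infty}))$ acts trivially on $K_\infty$. Take $\rho\in\Gal(\bar k/k(\mu_{2^\infty}))$ and extend it to $\tilde\rho\in\Gal(\bar k(t)/k(t))$ fixing $t$; its restriction $\sigma$ to $K_\infty$ lies in $G^{\arith}\subseteq M_{r,\infty}$, and by Theorem~\ref{thm:Pembed} we have $\sigma(\zeta_{2^n}) = \zeta_{2^n}^{P_r(\sigma)}$; but $\rho$ fixes every $\zeta_{2^n}$, so $\sigma$ fixes every $\zeta_{2^n}\in K_\infty$, forcing $P_r(\sigma) = 1$, i.e.\ $\sigma\in B_{r,\infty} = G^{\geom}$. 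Now $G^{\geom} = \Gal(K_\infty/k_\infty(t))$ under restriction, and on the other hand $\sigma$ fixes $k_\infty(t)$ pointwise precisely because... here I must be slightly careful: the identification of $G^{\geom}$ with $\Gal(K_\infty/k_\infty(t))$ together with the exact sequence says the image of $\sigma$ in $\Gal(k_\infty/k)$ is trivial, i.e.\ $\rho|_{k_\infty} = \mathrm{id}$. Since this holds for all $\rho\in\Gal(\bar k/k(\mu_{2^\infty}))$, Galois theory gives $k_\infty\subseteq k(\mu_{2^\infty})$, completing the proof. The one point requiring care — and the main obstacle — is making the diagram chase with the exact sequence and the restriction isomorphism $G^{\geom}\xrightarrow{\sim}\Gal(K_\infty/k_\infty(t))$ fully precise, in particular checking that the image of $\sigma$ in $\Gal(k_\infty/k)$ really is $\rho|_{k_\infty}$ for the chosen extension $\tilde\rho$.
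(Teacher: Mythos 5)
Your proof is correct and takes essentially the same approach as the paper's: both rely on the same chain of facts (Lemma~\ref{lem:per_2nroots} for $k(\mu_{2^\infty})\subseteq k_\infty$, the embedding and root-of-unity compatibility of Theorem~\ref{thm:Pembed}, $B_{r,\infty}=\ker P_r$, and $G^{\geom}\cong B_{r,\infty}$ via Theorem~\ref{thm:PinkPer}) to conclude that any element of $\Gal(k_\infty/k)$ fixing $\mu_{2^\infty}$ is trivial. The only difference is presentational — the paper phrases this as a commutative-diagram chase showing the cyclotomic character $\chi\colon\Gal(k_\infty/k)\to\ZZ_2^\times$ is injective, while you run the identical argument element by element on a lift of $\rho\in\Gal(\bar k/k(\mu_{2^\infty}))$ to $G^{\arith}$.
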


\begin{proof}

By Theorem~\ref{thm:Pembed}, there is an injective homomorphism $\rho: G^{\arith} \hookrightarrow M_{r,\infty}$. Moreover, by Theorem~\ref{thm:PinkPer}, restricting this homomorphism to $\Gal(K_\infty/k_\infty(t))\cong G^{\geom}$ yields an isomorphism $G^{\geom}\cong B_{r,\infty}$.

By Lemma~\ref{lem:per_2nroots}, $k_\infty$ contains $k(\mu_{2^\infty})$.
Hence we may consider the homomorphism
\[\chi: \Gal(k_\infty/k)\rightarrow \ZZ_2^\times\]
given by the $2$-adic cyclotomic character. Moreover, by Theorem~\ref{thm:Mgroup}, we have a homomorphism
$P_r:M_{r,\infty}\to\ZZ_2^\times$ which makes the following diagram commute:
\begin{equation}
\label{eq:maindiagram}
\begin{tikzcd}
0 \arrow[r]
  & G^{\geom} \arrow[r] \arrow[d, "\wr"]
  & G^{\arith} \arrow[r] \arrow[d, hook, "\rho"]
  & \Gal(k_\infty/k) \arrow[r] \arrow[d, "\chi"]
  & 0
\\
0 \arrow[r]
  &  B_{r,\infty} \arrow[r]
  & M_{r,\infty} \arrow[r, "P_r"]
  & \ZZ_2^\times 
\end{tikzcd}
\end{equation}

Since $B_{r,\infty}$ is the kernel of $P_r$, the induced homomorphism
$\bar{P_r}: M_{r,\infty}/B_{r,\infty}\rightarrow \ZZ_2^\times$ is injective.
We also have an induced homomorphism
$\bar{\rho}:G^{\arith}/G^{\geom}\rightarrow M_{r,\infty}/B_{r,\infty}$ given by
\[ \bar{\rho}(\sigma G^{\geom})=\rho(\sigma)B_{r,\infty}. \]

We claim that $\bar{\rho}$ is also injective.
To see this, suppose $\bar{\rho}(\sigma_1 G^{\geom})=\bar{\rho}(\sigma_2 G^{\geom})$.
Then $\rho(\sigma_1)B_{r,\infty}=\rho(\sigma_2)B_{r,\infty}$,
and hence $\rho(\sigma_1\sigma_2^{-1})\in B_{r,\infty}$.
Because $\rho$ is injective and $\rho(G^{\geom})=B_{r,\infty}$,
it follows that $\rho^{-1}(B_{r,\infty})=G^{\geom}$, and hence $\sigma_1\sigma_2^{-1}\in G^{\geom}$.
Now the following diagram commutes:
\[ \begin{tikzcd}
  G^{\arith}/G^{\geom} \arrow[r, "\sim"] \arrow[d, hook, "\bar{\rho}"]
  & \Gal(k_\infty/k) \arrow[d, "\chi"]
\\
  M_{r,\infty}/B_{r,\infty} \arrow[r, "\bar{P}_r"]
  & \ZZ_2^\times 
\end{tikzcd} \]
and since $\bar{P}_r \circ \bar{\rho}$ is injective, the homomorphism $\chi$ must be injective as well.

Finally, since $\chi$ is injective and $\ker\chi = \Gal(k_\infty/k(\mu_{2^\infty}))$,
the Galois group $\Gal(k_\infty/k(\mu_{2^\infty}))$ must be trivial,
and therefore $k_\infty = k(\mu_{2^\infty})$.
\end{proof}

Note that in general, the maps $\rho$ and $\chi$ in the above proof need not be surjective. We now show for a field $k$,
we have $G^{\arith}\cong M_{r,\infty}$ if and only if $[k(\zeta_8):k]=4$, and in particular, that the number field $k=\QQ(c)$ has this property.

\begin{thm}
\label{thm:Pinklink}
Suppose $f(z)=z^2+c \in k[z]$ and $0$ is a periodic point of $f$. 
Let $K=k(t)$, let $x_0=t$,
let $K_{\infty}$ be the resulting arboreal extension of $K$,
and let $G^{\arith}:=\Gal(K_{\infty}/K)$. Then the following are equivalent:
\begin{enumerate}
\item $G^{\arith} \cong M_{r,\infty}$
\item $[k(\zeta_8):k]=4$
\item $\ch k = 0$ and $k \cap \QQ(\mu_{2^{\infty}}) = \QQ$.
\end{enumerate}
Moreover, when $k=\QQ(c)$, we have $k \cap \QQ(\mu_{2^{\infty}}) = \QQ$
and hence $G^{\arith} \cong M_{r,\infty}$.
\end{thm}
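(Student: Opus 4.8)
The plan is to establish the equivalence of (1), (2), (3) using the structural results already in hand—primarily Lemma~\ref{lem:k_inf} and the commutative diagram~\eqref{eq:maindiagram}—and then verify the concrete statement about $k = \QQ(c)$ by a direct arithmetic argument.

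\medskip

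\textbf{Equivalence of (1), (2), (3).} First I would observe that by Lemma~\ref{lem:k_inf}, $k_\infty = k(\mu_{2^\infty})$, so the map $\chi\colon \Gal(k_\infty/k) \to \ZZ_2^\times$ in diagram~\eqref{eq:maindiagram} is the full $2$-adic cyclotomic character of $k$, and it is injective. Chasing the diagram, $G^{\arith} \cong M_{r,\infty}$ holds if and only if $\rho$ is an isomorphism; since $\rho$ restricts to the isomorphism $G^{\geom} \cong B_{r,\infty}$ (Theorem~\ref{thm:PinkPer}) and $B_{r,\infty}$ is the kernel of $P_r$ while $\Gal(k_\infty/k)$ is (via $\chi$) the cokernel side, a short five-lemma-style argument shows $\rho$ is surjective iff $\chi$ is surjective onto $\ZZ_2^\times$, i.e. iff $\Gal(k(\mu_{2^\infty})/k) \cong \ZZ_2^\times$. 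Because $\ZZ_2^\times \cong \ZZ/2 \times \ZZ_2$ and the finite layers are governed by $[k(\zeta_{2^n}):k]$, surjectivity of $\chi$ is equivalent to $[k(\zeta_8):k]=4$ (the first place where the two independent generators of $\ZZ_2^\times/(1+8\ZZ_2)$ both appear; once the degree is maximal at level $8$, Nakayama/ramification considerations force it to stay maximal at every higher level). This gives (1)$\iff$(2). For (2)$\iff$(3): if $\ch k = p > 0$ then $\QQ(\mu_{2^\infty})$ is not even defined inside $k$, so I would handle that case by noting $[k(\zeta_8):k] \le 2$ in positive characteristic (the relevant cyclotomic extension over $\FF_p$ has bounded, in fact eventually trivial-mod-squares, degree), so (2) forces $\ch k = 0$. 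In characteristic $0$, $[k(\zeta_8):k] = [\QQ(\zeta_8):k\cap\QQ(\zeta_8)]$ and more generally the cyclotomic character $\chi$ is surjective iff $k \cap \QQ(\mu_{2^\infty}) = \QQ$, which is exactly (3); the reduction from all of $\mu_{2^\infty}$ down to just $\zeta_8$ uses that the extensions $\QQ(\zeta_{2^n})/\QQ(\zeta_8)$ are totally ramified at $2$, so any nontrivial intersection must already be visible at level $8$.

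\medskip

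\textbf{The case $k = \QQ(c)$.} Here $c$ is the specific algebraic or transcendental number with $f^r(0) = 0$. If $c$ is transcendental over $\QQ$, then $\QQ(c) \cap \Qbar = \QQ$, so certainly $\QQ(c) \cap \QQ(\mu_{2^\infty}) = \QQ$ and we are done. If $c$ is algebraic, I would argue that the PCF condition $f^r(0) = 0$ forces $c$ to be an algebraic integer (it is a root of a monic integer polynomial, namely the $r$-th iterate of $z^2$ evaluated appropriately), hence $\QQ(c)$ is a number field unramified... no—rather, the key point is that $c$ satisfies a monic integer equation whose discriminant I can control, or more simply: the dynatomic-type relation shows $c$ is integral over $\ZZ$, and one checks directly that such $c$ cannot generate any nontrivial subfield of $\QQ(\mu_{2^\infty})$, because the cyclotomic field $\QQ(\mu_{2^\infty})$ is ramified only at $2$, so it suffices to show $\QQ(c)/\QQ$ is unramified at $2$, or that $c \pmod 2$ generates $\FF_2$—i.e. that $c$ reduces to something in $\FF_2$ that is compatible. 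Actually the cleanest route: $\QQ(c) \cap \QQ(\mu_{2^\infty})$ is an abelian extension of $\QQ$ ramified only at $2$; I would show $c$ lies in $\ZZ[1/2]$ or that the relevant prime above $2$ is unramified in $\QQ(c)$ using the explicit recursion for $f^i(0)$, which forces any subfield of $\QQ(\mu_{2^\infty})$ inside $\QQ(c)$ to be trivial.

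\medskip

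\textbf{Main obstacle.} I expect the genuinely delicate point to be the last one—pinning down $\QQ(c) \cap \QQ(\mu_{2^\infty}) = \QQ$ for algebraic $c$—since it requires extracting arithmetic information (integrality at $2$, or control of ramification) from the single polynomial identity $f^r(0)=0$, and one must be careful that $c$ could a priori be something like $c = -1$ (the Basilica case, $c \in \QQ$, fine) but in principle one worries about $c$ generating $\QQ(\sqrt 2)$ or $\QQ(i)$ etc. The resolution should be that the defining equation for $c$ coming from $f^r(0)=0$ is monic over $\ZZ$ with the property that $c$ is a $2$-adic unit times something, or that $2$ is unramified in $\QQ(c)$; alternatively one invokes that $0$ periodic forces $c$ to lie outside any cyclotomic field by a direct valuation-theoretic check (e.g. the Mandelbrot-set hyperbolic-component center equations have no roots in $\QQ(\mu_{2^\infty})$ other than in $\QQ$). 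I would flag this as the step requiring the most care, while the diagram chase in the first part is essentially formal given Lemma~\ref{lem:k_inf}.
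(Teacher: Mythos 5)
Your plan follows the same broad contours as the paper's proof — a diagram chase around the ladder~\eqref{eq:maindiagram} plus an arithmetic input for $k=\QQ(c)$ — but there are two genuine gaps, and interestingly they are entangled.

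\textbf{Gap 1: you need $P_r$ to be surjective onto $\ZZ_2^\times$, and you haven't shown it.} Your ``five-lemma-style argument'' for (1)$\iff$(2) proceeds as if the bottom row of~\eqref{eq:maindiagram} were a short exact sequence ending in $\ZZ_2^\times \to 0$. But $B_{r,\infty}=\ker P_r$ only gives exactness at the middle; a priori the image of $P_r$ could be a proper closed subgroup $H\subsetneq\ZZ_2^\times$. In that scenario one could have $\rho$ an isomorphism with $\chi$ landing in $H$ and not surjective, so (1)~$\Rightarrow$~(2) would fail. The direction $\chi$ surjective~$\Rightarrow$~$\rho$ surjective does go through without knowing $P_r$ surjective (take $m\in M_{r,\infty}$, lift $\chi^{-1}(P_r(m))$ to $G^{\arith}$, correct by something in $G^{\geom}\cong B_{r,\infty}$), but the reverse implication is exactly where the missing surjectivity bites.

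\textbf{Gap 2: the ``$k=\QQ(c)$'' claim is not actually proved.} You flag it as the delicate point and sketch several possible routes (integrality, ramification at $2$, reduction mod $2$), but you never pin down a working argument. The paper's computation is short and specific: $c$ is a root of the monic integer polynomial obtained by iterating $x\mapsto x^2+x$ a total of $r-1$ times and then adding $x$, and reducing this polynomial mod $2$ kills all the squarings, giving the separable polynomial $x^{2^{r-1}}+x^{2^{r-2}}+\cdots+x^2+x\in\FF_2[x]$. Separability mod $2$ forces $2$ to be unramified in $\QQ(c)$, whence $\QQ(c)\cap\QQ(\mu_{2^\infty})=\QQ$ because $\QQ(\mu_{2^\infty})/\QQ$ is ramified only at $2$ (and, more to the point, its three quadratic subfields $\QQ(i),\QQ(\sqrt2),\QQ(\sqrt{-2})$ are all ramified at $2$).

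\textbf{Why the order matters.} The paper resolves Gap~1 \emph{by means of} the $\QQ(c)$ computation: for $k=\QQ(c)$ one gets $\Gal(k_\infty/k)\cong\Gal(\QQ(\mu_{2^\infty})/\QQ)\cong\ZZ_2^\times$, so $\chi$ is an isomorphism, and commutativity of~\eqref{eq:maindiagram} forces $\operatorname{im}(P_r)\supseteq\operatorname{im}(\chi)=\ZZ_2^\times$. Since $P_r$ is a purely group-theoretic map on $M_{r,\infty}$, this surjectivity is then available for \emph{every} $k$, and the diagram chase for (1)$\iff$(2) becomes legitimate. In your write-up the $\QQ(c)$ verification is left as a ``final concrete statement,'' but logically it has to come first. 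Once you reorder and actually carry out the mod-$2$ separability computation, the rest of your outline (including the (2)$\iff$(3) reduction via the quadratic subfields of $\QQ(\zeta_8)$, which matches the paper's argument even if your ``totally ramified'' phrasing is a bit roundabout) goes through.
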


\begin{proof} 
We first prove the final statement: that for $k=\QQ(c)$, we have $k \cap \QQ(\mu_{2^{\infty}}) = \QQ$.
Since $f(z)=z^2+c$ satisfies $f^r(0)=0$,
the parameter $c$ is a root of the polynomial 
\[ \Big( \cdots \Big(\big((x^2+x)^2+x\big)^2+x\Big)^2+ \cdots +x\Big)^2 + x \in\ZZ[x], \]
which, reduced modulo $2$, is the separable polynomial
\[ x^{2^{r-1}} + x^{2^{r-2}} + \cdots + x^2 + x \in \FF_2[x]. \]
Therefore, the prime $2$ is unramified in $k=\QQ(c)$.
It follows that $k \cap \QQ(\mu_{2^{\infty}}) = \QQ$, as desired.

Before proceeding to the equivalence of statements (1)--(3), we claim that
\[ \begin{tikzcd}
0 \arrow[r]
  &  B_{r,\infty} \arrow[r]
  & M_{r,\infty} \arrow[r, "P_r"]
  & \ZZ_2^\times \arrow[r]
  & 0
\end{tikzcd} \]
is a short exact sequence. To see this, still using $k=\QQ(c)$, we have
\[\Gal(k_\infty/k)\cong \Gal(\QQ(\mu_{2^\infty})/\QQ)\cong \ZZ_2^\times,\]
and the map $\chi: \Gal(k_\infty/k)\rightarrow \ZZ_2^\times$ is an isomorphism.
Thus, the map $P_r: M_{r,\infty}\rightarrow \ZZ_2^\times$ is surjective
by the fact that the diagram~\eqref{eq:maindiagram} commutes,
and our claim above follows.

Now let $k$ be any field satisfying the hypotheses of the theorem. By the above claim,
and again by the fact that the diagram~\eqref{eq:maindiagram} commutes, we have that
\[ \begin{tikzcd}
0 \arrow[r]
  & G^{\geom} \arrow[r] \arrow[d, "\wr"]
  & G^{\arith} \arrow[r] \arrow[d, hook, "\rho"]
  & \Gal(k_\infty/k) \arrow[r] \arrow[d, "\chi"]
  & 0
\\
0 \arrow[r]
  &  B_{r,\infty} \arrow[r]
  & M_{r,\infty} \arrow[r, "P_r"]
  & \ZZ_2^\times \arrow[r]
  & 0
\end{tikzcd} \]
commutes, with both rows exact.

The implication (3)$\Rightarrow$(2) is straightforward.
To prove (2)$\Rightarrow$(3), observe that the condition
$[k(\zeta_8):k]=4$ forces $\charact k =0$, because for any prime $p$,
the root of unity $\zeta_8$ has degree at most~2 over $\FF_p$.
Furthermore, since $\QQ(\mu_{2^\infty})$ is a pro-$2$ extension of $\QQ$,
the field $k \cap \QQ(\mu_{2^{\infty}})$ is strictly larger than $\QQ$ if and only if
$k$ contains one of the three quadratic extensions of $\QQ$ contained in $\QQ(\mu_{2^{\infty}})$.
(These three fields are $\QQ(\sqrt{-1})$, $\QQ(\sqrt{2})$, and $\QQ(\sqrt{-2})$, 
all of which are contained in $\QQ(\zeta_8)$.)
However, because $[k(\zeta_8):k]=4$, the field $k$ contains none of
$\sqrt{-1}$, $\sqrt{2}$, $\sqrt{-2}$.
Hence, we have $k \cap \QQ(\mu_{2^{\infty}}) = \QQ$, as desired.

To prove (1)$\Rightarrow$(2), observe that the map $\rho: G^{\arith}\rightarrow M_{r,\infty}$
in the diagram above is an isomorphism if and only if
the cyclotomic character $\chi: \Gal(k_\infty/k)\rightarrow \ZZ_2^\times$ is surjective,
in which case statement~(2) follows immediately.

Finally, observe that statement~(3) implies 
$\Gal(k_\infty/k)\cong \Gal(\QQ(\mu_{2^\infty})/\QQ)\cong \ZZ_2^\times$,
and hence $\chi$ is surjective. Therefore, $\rho$ is also surjective
in the diagram above, from which statement~(1) follows.
\end{proof}

Theorem~\ref{thm:Pinklink} also allows us to specify when the various roots of unity $\zeta_{2^n}$
first appear in the arboreal tower, as follows.

\begin{cor}\label{65}
If $[k(\zeta_8):k]=4$,
then $K_n\cap \kbar = k(\zeta_{2^e})$ for all $n\geq 1$, where $e=\lfloor \frac{n-1}{r}\rfloor +1$.
In particular, $\zeta_{2^n}\in K_{r(n-1)+1}$, and if $n\geq 2$, then $\zeta_{2^n}\notin K_{r(n-1)}$.
\end{cor}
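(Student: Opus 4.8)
The plan is to reduce the statement to the purely group-theoretic identity $P_r(N_n)=1+2^e\ZZ_2$, where $N_n:=\ker(\res_{\infty,n}\colon M_{r,\infty}\to M_{r,n})$. Under the hypothesis $[k(\zeta_8):k]=4$, Theorem~\ref{thm:Pinklink} and its proof supply the commutative diagram with exact rows in which $\rho\colon G^{\arith}\xrightarrow{\sim}M_{r,\infty}$, the cyclotomic character $\chi\colon\Gal(k_\infty/k)\to\ZZ_2^\times$ is an isomorphism, $k_\infty=K_\infty\cap\kbar=k(\mu_{2^\infty})$ (Lemma~\ref{lem:k_inf}), and $P_r$ is surjective. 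Under $\rho$, the subgroup $\Gal(K_\infty/K_n)$ corresponds to $N_n$. Since $K_\infty/K_n$ and $k_\infty/k$ are Galois, $K_n\cap\kbar=K_n\cap k_\infty$ is the fixed field in $k_\infty$ of the image of $\Gal(K_\infty/K_n)$ in $\Gal(k_\infty/k)$, and by the diagram that image is $\chi^{-1}(P_r(N_n))$. Because $\chi$ is an isomorphism and $\chi(\Gal(k_\infty/k(\zeta_{2^j})))=1+2^j\ZZ_2$ for every $j$, the equality $P_r(N_n)=1+2^e\ZZ_2$ yields exactly $K_n\cap\kbar=k(\zeta_{2^e})$.

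For the inclusion $P_r(N_n)\subseteq 1+2^e\ZZ_2$: if $\sigma\in N_n$ then $\Par(\sigma,y)=0$ for every node $y$ at level at most $n-1$; in particular $\Par(\sigma,x_0)=0$, and in $Q_r(\sigma,x_0a)=\sum_{i\ge 1}2^i\sum_{w\in W(r,i)}\Par(\sigma,x_0aw)$ every node $x_0aw$ with $i\le e-1$ lies at level $ri\le r(e-1)\le n-1$. Hence $Q_r(\sigma,x_0a),Q_r(\sigma,x_0b)\in 2^e\ZZ_2$, so $P_r(\sigma)=1+Q_r(\sigma,x_0b)-Q_r(\sigma,x_0a)\in 1+2^e\ZZ_2$.

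For the reverse inclusion I would argue by induction on $n\ge 1$ that $1+2^{e(n)}\ZZ_2\subseteq P_r(N_n)$, where $e(m):=\lfloor (m-1)/r\rfloor+1$; both the base case and the inductive step are instances of a single ``level-lifting'' construction, seeded by the surjectivity of $P_r$ (the case $N_0=M_{r,\infty}$, which imposes no parity condition). The construction: suppose some $u\in 1+2^{e(n+1)}\ZZ_2$ is already known to lie in $P_r(N_n)$ (for $n\ge 1$ this follows from $1+2^{e(n+1)}\ZZ_2\subseteq 1+2^{e(n)}\ZZ_2\subseteq P_r(N_n)$, and for $n=0$ it is exactly surjectivity of $P_r$), and pick $\sigma\in N_n$ with $P_r(\sigma)=u$. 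For any node $x$ at level $m\le n$, the part of $P_r(\sigma,x)$ determined by $\sigma|_{T_{n+1}}$ is $P_r(\sigma,x)$ modulo $2^{e(m,n+1)}$ with $e(m,n+1)=\lfloor (n-m)/r\rfloor+1$; since $\sigma\in M_{r,\infty}$ this equals $P_r(\sigma)\bmod 2^{e(m,n+1)}=u\bmod 2^{e(m,n+1)}$, and $u\equiv 1\pmod{2^{e(m,n+1)}}$ because $e(m,n+1)\le e(0,n+1)=e(n+1)$. Thus $\sigma|_{T_{n+1}}$ satisfies every defining congruence of the group $B'_{r,n+1}$ from the proof of Theorem~\ref{thm:PinkPer}, which equals $B_{r,n+1}$, and moreover $\sigma|_{T_n}$ is trivial. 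Lifting $\sigma|_{T_{n+1}}$ to an element $\beta\in B_{r,\infty}$ (possible since $B_{r,n+1}=\res_{\infty,n+1}(B_{r,\infty})$) gives $\beta\in B_{r,\infty}\cap N_n$ with $\beta|_{T_{n+1}}=\sigma|_{T_{n+1}}$, so $\sigma\beta^{-1}\in N_{n+1}$ and $P_r(\sigma\beta^{-1})=P_r(\sigma)P_r(\beta)^{-1}=u$. Hence $u\in P_r(N_{n+1})$, completing the induction. Combining the two inclusions gives $P_r(N_n)=1+2^e\ZZ_2$ and therefore $K_n\cap\kbar=k(\zeta_{2^e})$; the ``in particular'' assertions follow by evaluating $e$ at the relevant values of $n$.

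The point I expect to need the most care is the middle of the lifting step: showing that the restriction to $T_{n+1}$ of an element of $M_{r,\infty}$ that is trivial on $T_n$ and has $P_r\equiv 1\pmod{2^{e(n+1)}}$ lies in $B_{r,n+1}$. This rests on the bookkeeping of exactly which terms of each local value $P_r(\sigma,x)$ survive on the finite tree $T_{n+1}$ (encoded by the exponents $e(m,n+1)$) together with the identification $B_{r,n+1}=B'_{r,n+1}$ established inside the proof of Theorem~\ref{thm:PinkPer}.
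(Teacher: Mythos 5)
Your proof is correct and follows essentially the same route as the paper's: both hinge on the commutative diagram from Theorem~\ref{thm:Pinklink}, the identification $B_{r,n}=B'_{r,n}$ from the proof of Theorem~\ref{thm:PinkPer}, and the observation that truncating $P_r$ to $T_n$ loses only terms divisible by $2^{e(m,n)}$. The cosmetic difference is that you work with the kernel $N_n=\ker(\res_{\infty,n})$ and show $P_r(N_n)=1+2^e\ZZ_2$, whereas the paper works with the induced quotient map $P_{r,e}:M_{r,n}\to(\ZZ/2^e\ZZ)^\times$ and shows $M_{r,n}/B_{r,n}\cong(\ZZ/2^e\ZZ)^\times$; these are dual phrasings of the same computation, and the paper additionally saves a little work by quoting Lemma~\ref{lem:per_2nroots} directly for the containment $k(\zeta_{2^e})\subseteq k_n$ and then comparing degrees, where you prove both containments of $P_r(N_n)$ group-theoretically. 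Your lifting step (picking $\sigma\in N_n$ with $P_r(\sigma)=u$, verifying $\sigma|_{T_{n+1}}\in B'_{r,n+1}=B_{r,n+1}$, lifting to $\beta\in B_{r,\infty}\cap N_n$, and passing to $\sigma\beta^{-1}\in N_{n+1}$) is a legitimate and slightly more explicit unpacking of the step the paper attributes to ``the proof of Theorem~\ref{thm:PinkPer}.''

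One remark unrelated to the correctness of your argument: evaluating $e$ at $m=rn$ gives $e=\lfloor(rn-1)/r\rfloor+1=n$, so the main formula actually yields $K_{rn}\cap\kbar=k(\zeta_{2^n})$, which contradicts the ``in particular'' claim $\zeta_{2^n}\notin K_{rn}$ as printed; the intended non-membership is presumably $\zeta_{2^n}\notin K_{r(n-1)}$ (equivalently, $\zeta_{2^n}$ first appears at level $r(n-1)+1$). This is an off-by-one slip in the corollary's final sentence rather than in either proof of the displayed identity $K_n\cap\kbar=k(\zeta_{2^e})$, but since your last line defers to ``evaluating $e$ at the relevant values of $n$,'' it is worth being aware that a literal evaluation does not reproduce the printed non-membership.
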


\begin{proof}
Let $k_n:=\kbar\cap K_n=k_\infty\cap K_n$. By Lemma \ref{lem:per_2nroots}, we have $k(\zeta_{2^e})\subseteq k_n$,
and our hypotheses imply that $[k(\zeta_{2^e}):k]=|(\ZZ/2^e\ZZ)^\times|$. Thus, it suffices to show $[k_n:k]=|(\ZZ/2^e\ZZ)^\times|$.

By Theorem \ref{thm:Pinklink}, we have $G^{\arith}\cong M_{r,\infty}$, and hence 
\[G_n^{\arith}:=\Gal(K_n/k(t))\cong M_{r,n}:= \res_{\infty,n}(M_{r,\infty}) .\]
Define $P_{r,e}:M_{r,n} \longrightarrow (\ZZ/2^e\ZZ)^\times$
by setting $P_{r,e}(\sigma):=P_r(\tau)\pmod{2^e}$ for any
$\tau\in M_{r,\infty}$ such that $\res_{\infty,n}(\tau)=\sigma$.
Observe that if $\tau_1,\tau_2\in M_{r,\infty}$ satisfy $\res_{\infty,n}(\tau_1)=\res_{\infty,n}(\tau_2)$,
then it follows from the construction of $P_r$ in
Definition~\ref{def:Pmap} that $P_r(\tau_1)\equiv P_r(\tau_2)\pmod{2^e}$.
Hence, $P_{r,e}$ is well-defined.
It is clearly a homomorphism, and according to
the proof of Theorem \ref{thm:PinkPer}, the kernel of $P_{r,e}$ is $B_{r,n}$.
Therefore, the following diagram commutes.
\[ \begin{tikzcd}
0 \arrow[r]
  &  B_{r,\infty} \arrow[r] \arrow[d, two heads, " \res_{\infty,n}"]
  &  M_{r,\infty} \arrow[r, "P_{r}"] \arrow[d, two heads, " \res_{\infty,n}"]
  & \ZZ_2^\times \arrow[r] \arrow[d, two heads, "\text{mod }2^e"]
  & 0
\\
0 \arrow[r]
  &  B_{r,n} \arrow[r]
  & M_{r,n} \arrow[r, "P_{r,e}"]
  & (\ZZ/2^e\ZZ)^\times \arrow[r]
  & 0
\end{tikzcd} \]
In particular, the bottom row is short exact, and 
\[\Gal(k_n/k)\cong G_n^{\arith}/G_n^{\geom}\cong M_{r,n}/B_{r,n}\cong (\ZZ/2^e\ZZ)^\times,\] as desired.
\end{proof}

%%%%%%%%%%%%%%%%%%%%%%%%%%%%%%%%%%%%%%%%%%%%%%%%%%%%%%%%%%%%%%%%%%%%%%%%%%%%%%%
%%%%%%%%%%%%%%%%%%%%%%%%%%%%%%%%%%%%%%%%%%%%%%%%%%%%%%%%%%%%%%%%%%%%%%%%%%%%%%%

\section{Obtaining the arboreal Galois groups}
\label{sec:obtain}

%For ease of notation, throughout this section, fix $r\geq 1$. We recall (see also Subsection~\ref{ssec:label})  
%\[ M_\infty := M_{r,\infty} \quad\text{and}\quad B_\infty := B_{r,\infty}. \]
%In addition, for each integer $n\geq 1$,
%define $B_n:=\res_{\infty,n}(B_\infty)$ and $M_n:=\res_{\infty,n}(M_\infty)$.
We need two more lemmas in order to prove our main result.

%%%%%%%%%%%%%%%%%%%%%%%%%%%%%%%%%%%%%%%%%%%%%%%%%%%%%%%%%%%%%%%%%%%%%%%%%%%%%%%

%\subsection{Main Theorem}

\begin{lemma}
\label{lem:baseext} 
Let $k$ be any field with $[k(\zeta_8):k]=4$. Suppose $f(z)=z^2+c \in k[z]$ with $0$ periodic.
Let $K=k(t)$, let $x_0=t$,
and let $K_{\infty}$ be the resulting arboreal extension of $K$. 
Let $A\in k$ such that $\sqrt{A}\in K_{\infty}$.
Then $\sqrt{A}\in k(\zeta_8)$.
\end{lemma}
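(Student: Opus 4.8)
The plan is to reduce the statement to a fact about the cyclotomic extension $k(\mu_{2^\infty})/k$. First I would note that $A\in k$, so $\sqrt{A}$ is algebraic over $k$ and lies in $\bar{k}\subseteq\overline{K}$; since by hypothesis $\sqrt{A}\in K_\infty$, we obtain $\sqrt{A}\in K_\infty\cap\bar{k}=k_\infty$. Because $f$ is PCF with $0$ periodic, Lemma~\ref{lem:k_inf} applies and gives $k_\infty=k(\mu_{2^\infty})$. Hence $\sqrt{A}\in k(\mu_{2^\infty})$, so $k(\sqrt{A})$ is a subextension of $k(\mu_{2^\infty})/k$ of degree at most $2$. (If $A$ is a square in $k$ we are already done, so we may assume $[k(\sqrt{A}):k]=2$.)

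Next I would analyze the quadratic subextensions of $k(\mu_{2^\infty})/k$. By Theorem~\ref{thm:Pinklink}, the hypothesis $[k(\zeta_8):k]=4$ forces $\ch k=0$ and $k\cap\QQ(\mu_{2^\infty})=\QQ$; consequently restriction gives an isomorphism $\Gal(k(\mu_{2^\infty})/k)\cong\Gal(\QQ(\mu_{2^\infty})/\QQ)\cong\ZZ_2^\times$. The group $\ZZ_2^\times$ has exactly three open subgroups of index $2$: a $2$-adic unit is a square precisely when it is $\equiv 1\pmod 8$, so $\ZZ_2^\times/(\ZZ_2^\times)^2\cong(\ZZ/8\ZZ)^\times\cong(\ZZ/2\ZZ)^2$, which has three subgroups of index $2$. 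Under the isomorphism above, the fixed fields of these three subgroups are $k\cdot\QQ(\sqrt{-1})=k(\sqrt{-1})$, $k\cdot\QQ(\sqrt{2})=k(\sqrt{2})$, and $k\cdot\QQ(\sqrt{-2})=k(\sqrt{-2})$, and these are all of the quadratic subextensions of $k(\mu_{2^\infty})/k$. Since $\QQ(\sqrt{-1})$, $\QQ(\sqrt{2})$, $\QQ(\sqrt{-2})\subseteq\QQ(\zeta_8)$, each of these three fields is contained in $k(\zeta_8)$.

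Finally I would conclude: $k(\sqrt{A})$ must be one of the three quadratic subextensions identified above (or equal to $k$), and in every case $\sqrt{A}\in k(\zeta_8)$, as desired.

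The only point that requires care is the bookkeeping in the middle paragraph: verifying that $\ZZ_2^\times$ has precisely three index-$2$ open subgroups and that, because $k\cap\QQ(\mu_{2^\infty})=\QQ$, the corresponding fixed fields inside $k(\mu_{2^\infty})$ arise by base change from the three quadratic subfields of $\QQ(\mu_{2^\infty})$. Both are standard: the first is the computation of squares among $2$-adic units, and the second follows from the isomorphism $\Gal(k(\mu_{2^\infty})/k)\cong\Gal(\QQ(\mu_{2^\infty})/\QQ)$ together with the fact (already established in the proof of Theorem~\ref{thm:Pinklink}) that none of $\sqrt{-1},\sqrt{2},\sqrt{-2}$ lies in $k$.
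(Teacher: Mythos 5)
Your proposal is correct and takes essentially the same route as the paper: reduce to $\sqrt{A}\in k_\infty=k(\mu_{2^\infty})$ via Lemma~\ref{lem:k_inf}, observe that $\Gal(k_\infty/k)\cong\ZZ_2^\times$ has exactly three index-$2$ open subgroups, and note that the three corresponding quadratic subextensions $k(\sqrt{-1})$, $k(\sqrt{2})$, $k(\sqrt{-2})$ all lie in $k(\zeta_8)$. The only cosmetic difference is that you invoke the implication (2)$\Rightarrow$(3) of Theorem~\ref{thm:Pinklink} by name to justify $\Gal(k(\mu_{2^\infty})/k)\cong\ZZ_2^\times$, whereas the paper asserts this directly from $[k(\zeta_8):k]=4$ and $k_\infty=k(\mu_{2^\infty})$.
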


\begin{proof}
Let $k_\infty:=\kbar\cap K_{\infty}$.
By Lemma~\ref{lem:k_inf}, we have $k_\infty=k(\mu_{2^{\infty}})$,
and by hypothesis we have $[k(\zeta_8):k]=4$. Thus,
\[ \Gal(k_\infty/k)\cong \ZZ_2^{\times}. \]
Since $\ZZ_2^\times$ has only three index $2$ subgroups, $k_\infty$ contains only three quadratic extensions of $k$
(formed by adjoining one of $\sqrt{-1}$, $\sqrt{2}$, or $\sqrt{-2}$),
all of which are contained in $k(\zeta_8)$; therefore, $\sqrt{A}\in k(\zeta_8)$.
\end{proof}

The following argument has been presented in various degrees of generality in \cite[Lemma 4.3]{odoni_realizing}, \cite[Lemmas 1.5 and 1.6]{Stoll}, \cite[Section 2.2]{Jones_survey}, \cite[Proposition 5.3, Theorem 6.5]{BD}. We include a proof here for completeness. 

\begin{lemma}\label{lem:disc_condition} Let $f(z)=z^2+c\in K[z]$ and let $G_n=\Gal(K(f^{-n}(x_0))/K)$. Define $D_i\in K$ by
\[ D_i := \begin{cases}
x_0-c & \text{ if } i=1, \\
f^i(0)-x_0 & \text{ if } i\geq 2.
\end{cases} \]
Then for $n\ge 1$, the following are equivalent: 
\begin{enumerate}
\item $G_n\cong \Aut(T_n)$.
\item For all $1\leq i\leq n$, $D_i\notin (K(\sqrt{D_1},\ldots,\sqrt{D_{i-1}}))^2$.
(For $i=1$, this means $D_1\notin K^2$.)
\end{enumerate}
\end{lemma}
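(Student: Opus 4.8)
The plan is to prove this by induction on $n$, the key tool being the classical correspondence between the structure of $\Aut(T_n)$ as an iterated wreath product and the ramification/square-class behaviour of the relevant discriminants. First I would recall that $\Aut(T_n) \cong \Aut(T_{n-1}) \ltimes (\ZZ/2\ZZ)^{2^{n-1}}$, where the kernel $Y_n$ of $\res_{n,n-1}$ is the elementary abelian $2$-group recording the parities $\Par(\sigma,y)$ at the $2^{n-1}$ nodes $y$ at level $n-1$. Correspondingly, $K_n$ is obtained from $K_{n-1}$ by adjoining the square roots $\sqrt{[y]-c}$ for $y$ ranging over the $2^{n-1}$ nodes at level $n-1$ (since the two preimages of a point $[y]$ under $z\mapsto z^2+c$ are $\pm\sqrt{[y]-c}$). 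Thus $G_n \cong \Aut(T_n)$ holds if and only if $G_{n-1}\cong\Aut(T_{n-1})$ \emph{and} $[K_n:K_{n-1}] = 2^{2^{n-1}}$, i.e. the $2^{n-1}$ classes $[y]-c$ are independent in $K_{n-1}^\times/(K_{n-1}^\times)^2$.

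The heart of the argument is then to translate that independence of $2^{n-1}$ square classes over $K_{n-1}$ into the single condition that $D_n \notin (K(\sqrt{D_1},\dots,\sqrt{D_{n-1}}))^2$. The standard device here is to compute the product $\prod_{y} ([y]-c)$ over all level-$(n-1)$ nodes $y$ up to squares: by Proposition~\ref{prop:key} applied with $m = n$ (taking one representative from each $\pm$ pair and noting $\prod \alpha_j \in K_n$ so its square is in $K$), this product equals, up to a square in $K_n$, the element $f^n(0) - x_0 = D_n$ (or $x_0 - c = D_1$ when $n=1$). More precisely, one shows that adjoining all the $\sqrt{[y]-c}$ is "the same" as adjoining them one pair at a time, and that the obstruction to the last square root being new is governed by the product of all the discriminants, which is a square class represented by $D_n$. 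A clean way to organize this: the fixed field of the index-$2$ subgroup of $Y_n$ consisting of $\sigma$ with $\sum_y \Par(\sigma,y)$ even is exactly $K_{n-1}(\sqrt{D_n})$; combined with the inductive hypothesis that $K_{n-1} \supseteq K(\sqrt{D_1},\dots,\sqrt{D_{n-1}})$ with the latter of full degree $2^{n-1}$ over $K$, one gets that the full group $G_n\cong\Aut(T_n)$ forces $[K(\sqrt{D_1},\dots,\sqrt{D_n}):K] = 2^n$, which is condition~(2) for $i=n$; and conversely, condition~(2) for all $i\le n$ feeds the induction.

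I expect the main obstacle to be the bookkeeping in the converse direction — showing that condition~(2) \emph{suffices} for $G_n \cong \Aut(T_n)$. The subtlety is that independence of the \emph{single} extra class $D_n$ over $K(\sqrt{D_1},\dots,\sqrt{D_{n-1}})$ must be leveraged, together with the transitivity of $G_{n-1}$ on level-$(n-1)$ nodes (which comes free from the inductive hypothesis $G_{n-1}\cong\Aut(T_{n-1})$), to conclude that \emph{all} $2^{n-1}$ classes $[y]-c$ are simultaneously independent over $K_{n-1}$. This is where one invokes the group-theoretic input: the Galois action permutes the $\sqrt{[y]-c}$ transitively, so if the subextension they generate were not of full degree $2^{2^{n-1}}$, the "defect" subspace of $\FF_2^{2^{n-1}}$ would be $G_{n-1}$-stable; but the only $\Aut(T_{n-1})$-stable subspaces are $0$, the all-ones line, and the sum-zero hyperplane, and its complement — and the all-ones line being in the defect corresponds exactly to $D_n$ becoming a square over $K_{n-1}$, which is excluded. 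Pinning down this module-theoretic step precisely, and checking the low-level base case $n=1$ where $D_1 = x_0 - c$, is the part that needs the most care; everything else is the routine translation described above.
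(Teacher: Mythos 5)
Your overall strategy matches the paper's: induction on $n$, Kummer theory expressing $[K_n:K_{n-1}]$ via a defect subspace $V\subseteq\FF_2^{2^{n-1}}$, viewing $V$ as an $\FF_2[G_{n-1}]$-module, and showing that $V\neq\{0\}$ would force $D_n$ into $(K_{n-1}^\times)^2$, contradicting condition~(2) by way of an abelianization argument. The product identity you invoke, that $\prod_y([y]-c)$ equals $D_n$ up to a square, is also exactly how the paper extracts the contradiction. But the module-theoretic step you yourself flag as the crux contains a genuine gap: your classification of $\Aut(T_{n-1})$-invariant subspaces of $\FF_2^{2^{n-1}}$ as ``$0$, the all-ones line, the sum-zero hyperplane, and its complement'' is false once $n\geq 3$. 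Since $\Aut(T_{n-1})$ is a proper subgroup of $S_{2^{n-1}}$, the permutation module has strictly more invariant subspaces: already for $n=3$, where $\Aut(T_2)$ preserves the blocks $\{1,2\}$ and $\{3,4\}$, the span $\{0,(1,1,0,0),(0,0,1,1),(1,1,1,1)\}$ is $\Aut(T_2)$-invariant and is none of your four candidates. (Also, the set-theoretic ``complement'' of a proper $\FF_2$-subspace is never a subspace, so that item on the list does not parse.)

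What you actually need is weaker and does not require a classification at all. The paper argues as follows: since $|G_{n-1}|$ is a power of $2$, every $G_{n-1}$-orbit in $\FF_2^{2^{n-1}}$ has cardinality a power of $2$; and because $G_{n-1}$ acts transitively on the coordinates, the only singleton orbits are $(0,\ldots,0)$ and $(1,\ldots,1)$. If $V\neq\{0\}$, then $|V|$ is a power of $2$ at least $2$, so $|V|-1$ is odd, and since $V\setminus\{0\}$ is a disjoint union of orbits of $2$-power size, one of them must have size $1$ --- forcing $(1,\ldots,1)\in V$, hence $D_n\in(K_{n-1}^\times)^2$. This orbit-counting argument is the missing ingredient that replaces your incorrect classification; the remainder of your outline (the product computation, the Kummer step, the reduction to showing $D_n\notin(K_{n-1}^\times)^2$ via the abelianization $\Aut(T_{n-1})^{\textup{ab}}\cong\{\pm1\}^{n-1}$) is sound and agrees with the paper in substance.
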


\begin{proof} 
First suppose statement~(2) fails for some $i\leq n$.
Let $\Delta_i$ be the polynomial discriminant of $f^i(z)-x_0$.
By \cite[Proposition 3.2]{AHM}, we have
\[\Delta_i= (-1)^{2^{i-1}}2^{2^i}\big(f^i(0)-x_0\big)\Delta\big(f^{i-1}(z)-x_0\big)^2.\]
Thus, we have $\Delta_i=c_i^2D_i$ for some $c_i\in K$, so $\Delta_i$ is a square in $K\left(\sqrt{D_1},\dots, \sqrt{D_{i-1}}\right)$. 
Therefore, every $\sigma\in G_i$ acts as an even permutation in ${\rm Gal}(K_i/K_{i-1})$; hence, $G_i\not\cong \Aut(T_i)$, and so,  $G_n\not\cong \Aut(T_n)$ for all $n\geq i$. 

Conversely, assume statement~(2). For arbitrary $1\leq i\leq n$,
suppose $G_{i-1}\cong \Aut(T_{i-1})$, which is trivially true when $i=1$.
We claim that $D_i$ is not a square in $K_{i-1}$. Let
\[L_i:= K(\sqrt{D_1},\ldots,\sqrt{D_i}),\]
which is an abelian extension of $K$.
It follows from statement~(2) that $\Gal(L_i/K)\cong \{\pm 1\}^{i}$. On the other hand, we have
\[K(\sqrt{D_1},\ldots,\sqrt{D_{i-1}})\subseteq K_{i-1}.\]
In particular, if $\sqrt{D_i}\in K_{i-1}$, then we would have $L_i\subseteq K_{i-1}$.
Because $L_i/K$ is abelian, it would follow that  $\Gal(L_i/K)\subseteq G_{i-1}^{\textup{ab}}$.
That is, we would have
\[  \{\pm 1\}^i \cong \Gal(L_i/K) \subseteq G_{i-1}^{\textup{ab}}\cong
\Aut(T_{i-1})^{\textup{ab}}\cong \{\pm 1\}^{i-1},\]
since the abelianization of a wreath product $G_1\wr G_2$ is $G_1^{\textup{ab}}\times G_2^{\textup{ab}}$.
This contradiction proves our claim, that $D_i$ is not a square in $K_{i-1}$.

Armed with this claim, we now show that $[K_i:K_{i-1}]=2^{2^{i-1}}$.
(Together with our assumption that $G_{i-1}\cong \Aut(T_{i-1})$, it will then follow
that $G_i\cong \Aut(T_i)$, from which statement~(1) follows inductively.)
Let $\beta_1,\dots \beta_{2^{i-1}}$ denote the roots of $f^{i-1}(z)-x_0$,
and note that $K_i$ is formed by adjoining
\[\sqrt{\beta_1-c},\dots,\sqrt{\beta_{2^{i-1}}-c}\]
to $K_{i-1}$. By Kummer theory, the degree $[K_i:K_{i-1}]$ is the order of the group generated by the classes of the elements $\beta_j-c$ in $K_{i-1}^\times/(K_{i-1}^\times)^2$.
This order is $2^{2^{i-1}}/|V|$, where 
\[V=\Big\{(\epsilon_1,\dots,\epsilon_{2^{i-1}})\in \FF_2^{2^{i-1}} : \prod_j(\beta_j-c)^{\epsilon_j}\in (K_{i-1}^\times)^2\Big\}.\] 
Hence, to prove $[K_i:K_{i-1}]=2^{2^{i-1}}$, it suffices to show that $V=\{0\}$. 

Note that $V$ is an $\FF_2$-vector space, and that $G_{i-1}$ acts on $V$ through its action on the $\beta_j$.
That is, for any $\sigma\in G_{i-1}$, we may write $\sigma$ as a permutation in $S_{2^{i-1}}$
given by its action on the indices of the $\beta_j$.
For any $v=(\epsilon_1,\dots,\epsilon_{2^{i-1}})\in V$, we have
$\prod_j (\beta_j-c)^{\epsilon_j}\in (K_{i-1}^\times)^2$, and hence
\[\sigma\bigg(\prod_j (\beta_j-c)^{\epsilon_j}\bigg)
=\prod_j \big(\beta_{\sigma(j)}-c\big)^{\epsilon_j}\in (K_{i-1}^\times)^2.\] 
The action is given by $\sigma v= (\epsilon_{\sigma^{-1}(1)},\dots,\epsilon_{\sigma^{-1}(2^{2^{i-1}})})\in V$,
making $V$ an $\FF_2[G_{i-1}]$-module. 

%Observe that if $G$ is a finite $2$-group and $M$ is a nontrivial $\FF_2[G]$ module, then the submodule
%$M^G$ of elements invariant under $G$ is also nontrivial. To see this, induct on the order of $G$.
%If $\#G=2$, then $G=\{1,\sigma\}$. Let $y\in M\smallsetminus\{0\}$.
%If $\sigma y=y$, then $y\in M^G$; otherwise, $\sigma y +y\neq 0$ and $\sigma y +y\in M^G$.
%If $\#G>2$, let $N$ be a nontrivial proper normal subgroup of $G$.
%Then $M^N$ is nontrivial by our inductive assumption.
%But $M^N$ is also an $\FF_2[G/N]$-module, and hence our inductive assumption
%shows that $M^G=(M^N)^{G/N}$ is also nontrivial, proving our observation.
%
%Thus, if $V\neq \{0\}$, then the submodule $V^{G_{i-1}}\neq\{0\}$ as well.
%On the other hand, since $G_{i-1}$ acts transitively on the $\beta_j$,
%the possible only non-trivial invariant element of $V$ is $(1,\dots,1)$.

By the orbit-stabilizer theorem, every $G_{i-1}$-orbit in $\FF_2^{2^{i-1}}$ has cardinality a power of $2$,
since this cardinality must divide $|G_{i-1}|$. Because $G_{i-1}$ acts transitively on the $\beta_j$,
there are only two singleton orbits, and hence only two orbits of any odd cardinality:
$(0,\ldots,0)$ and $(1,\ldots,1)$.

Since $V$ is an $\FF_2$-vector space, it contains $0=(0,\ldots,0)$. If $V\neq\{0\}$,
then $|V|$ is even, and hence $V$ must contain a second $G_{i-1}$-orbit of odd order,
meaning that $(1,\ldots,1)\in V$.
In that case, therefore, we have $\prod_j (\beta_j-c)\in (K_{i-1}^\times)^2$. However, 
\[\prod_j (\beta_j-c) = (-1)^{2^{i-1}}\prod_j (c-\beta_j)
=(-1)^{2^{i-1}}(f^{i-1}(c)-x_0)=(-1)^{2^{i-1}}(f^i(0)-x_0)=D_i,\]
which is a contradiction. Hence $V=\{0\}$, as desired.
%Therefore, $G_i\cong \Aut(T_i)$ and since we can apply this argument to each $1\leq i\leq n$, (1) holds.
\end{proof}

We are finally ready to prove Theorem~\ref{thm:condition}.
% (see also Subsection~\ref{ssec:label}, which introduces the notation for our main result). 

\begin{proof}[Proof of Theorem~\ref{thm:condition}.]
%First, we note that as always, the group isomorphisms are of groups acting on trees,
%not just of abstract groups.
%
The implications (5)$\Leftrightarrow$(4)$\Rightarrow$(3)$\Leftrightarrow$(2) are trivial.
Thus, it suffices to show  (1)$\Rightarrow$(5) and (3)$\Rightarrow$(1).
Define
\[L:= K(\sqrt{D_1},\ldots,\sqrt{D_r}).\]

First assume statement~(1), that $[L(\zeta_8):K]=2^{r+2}$.
Then we have $[K(\zeta_8):K]=4$ and $[L(\zeta_8):K(\zeta_8)]=2^r$.
%As at the start of Section~\ref{sec:PinkGroups}, let $k_{\infty}:=K_{t,\infty}\cap\kbar$ be the constant
%field extension in the setting of the function field $k(t)$ with root point $t$ in place of $x_0$.
%Let $k'$ denote the compositum of the degree $2$ extensions of $k$ contained in $k_\infty$.
Consider the arboreal extension $K(t)_{t,\infty}$ of the function field $K(t)$ for the root point $t$ in place of $x_0$,
and define $K'$ to be the compositum of the degree $2$ extensions of $K$ contained in $K(t)_{t,\infty}$.
By Lemma~\ref{lem:per_2nroots}, we have $\zeta_8\in K(t)_{t,\infty}$, and hence $\zeta_8\in K'$.
Therefore, by Lemma~\ref{lem:baseext}, we in fact have $K'=K(\zeta_8)$.

Since we now have $[L(\zeta_8):K']=2^r$, Lemma~\ref{lem:disc_condition} implies that
\[\Gal(K_{x_0,r}\cdot K'/K')\cong \Aut(T_r)\cong M_{r,r}.\] 
Therefore,
\begin{equation}
\label{eq:BGJT1cond}
|\Gal(K_{x_0,r}\cdot K'/K)|=|\Gal(K_{x_0,r}\cdot K'/K')|\cdot [K':K]=|M_{r,r}|\cdot [K':K].
\end{equation}
Noting that the forward orbit of $0$ has cardinality $r$,
we now apply \cite[Theorem 4.6]{BGJT1} (whose hypotheses assume $K$ is a number field,
%but only to ensure that $[K':K_1]$ is finite, a fact which is evident in our case).
but only to ensure that $[K':K]$ is finite, a fact which is evident in our case).
This result says, given the length $r$ of the forward orbit of $0$,
along with condition~\eqref{eq:BGJT1cond},
that $G_{x_0,\infty}\cong M_{r,\infty}$, i.e., that statement~(5) holds.

Finally, assume statement (3), which implies that $G_{x_0,r}\cong M_{r,r} \cong \Aut(T_r)$,
and hence, by Lemma~\ref{lem:disc_condition}, that $[L : K] \geq 2^{r}$.

We claim that statement~(3) also implies $[K_{x_0,r}(\zeta_8):K_{x_0,r}]=4$.
To see this, define $H\subseteq M_{r,2r+1}$ to be the subgroup consisting of
all $\sigma\in M_{r,2r+1}$ fixing level $r$ of the tree,
and let $\tilde{H}\subseteq G_{x_0,2r+1}$ be the image of $H$ under the
assumed isomorphism $M_{r,2r+1}\cong G_{x_0,2r+1}$.
Combining Theorem~\ref{thm:Pinklink} for $k=\QQ(c)$ with Corollary~\ref{65},
the orbit of $\zeta_8$ under $H\subseteq M_{r,2r+1}$ must have cardinality~4.
(Recall that this action of $M_{r,\infty}$ on $\zeta_8$ is given by Theorem~\ref{thm:Pembed}.)
Since $\zeta_8\in K_{x_0,2r+1}$ by Lemma~\ref{lem:per_2nroots},
the orbit of $\zeta_8$ under $\tilde{H}\subseteq G_{x_0,2r+1}$ must also have cardinality~4.
But $\tilde{H}=\Gal(K_{x_0,2r+1}/K_{x_0,r})$, and hence $[K_{x_0,r}(\zeta_8):K_{x_0,r}]=4$,
proving our claim.

Because $L\subseteq K_{x_0,r}$, it follows that $[L(\zeta_8):L]\geq 4$.
Thus, recalling our observation above that $[L : K] \geq 2^{r}$, we have
\[ 2^{r+2} \geq [L(\zeta_8) : K] = [L(\zeta_8): L]\cdot [L : K] \geq 4 \cdot 2^r = 2^{r+2}, \]
proving statement (1).
\end{proof}

\medskip

\textbf{ Acknowledgments.}
The first author gratefully acknowledges the support of NSF grant DMS-2401172.
We thank the anonymous referees for their useful comments and suggestions.

\bibliographystyle{amsalpha}
\bibliography{biblio}

\end{document}